\newcommand{\fg}{\mathfrak g}
\newcommand{\frs}{\mathfrak s}
\newcommand{\cA}{{\mathcal A}}
\newcommand{\cI}{{\mathcal I}}
\newcommand{\cE}{{\mathcal E}}
\newcommand{\cP}{{\mathcal P}}
\newcommand{\N}{\mathbb{N}}
\newcommand{\R}{\mathbb{R}}
\newcommand{\Z}{\mathbb{Z}}
\newcommand{\T}{\mathbb{T}}
\newcommand{\scA}{\mathscr{A}}
\newcommand{\scB}{\mathscr{B}}
\newcommand{\scC}{\mathscr{C}}
\newcommand{\scF}{\mathscr{F}}
\newcommand{\scI}{\mathscr{I}}
\newcommand{\scM}{\mathscr{M}}
\newcommand{\scO}{\mathscr{O}}
\newcommand{\scT}{\mathscr{T}}
\newcommand{\X}{\mathsf{X}}
\newcommand{\cring}{\cin\mathsf{Ring}}
\newcommand{\pcring}{\mathsf{P}\cin\mathsf{Ring}}
\newcommand{\PLCRS}{\mathsf{PL\cin RS}}
\newcommand{\LCRS}{\mathsf{L\cin RS}}
\newcommand{\Open}{\mathsf{Open}}
\DeclareMathAlphabet{\mathpzc}{OT1}{pzc}{m}{it}
\newcommand{\inv}{^{-1}}
\newcommand{\op}[1]{{#1}^{\mbox{\sf{\tiny{op}}}}}
\newcommand{\cin}{C^\infty}
\newcommand{\uu}[1]{\underline{#1}}
\newcommand{\ti}[1]{\tilde{#1}}
\DeclareMathOperator{\supp}{supp}
\DeclareMathOperator{\Hom}{Hom}
\DeclareMathOperator{\id}{id}
\DeclareMathOperator{\Spec}{Spec}
\DeclareMathOperator{\ev}{\mathsf{ev}}
\numberwithin{equation}{section}
\theoremstyle{definition}
\newtheorem{thm}{Theorem}[section]
\newtheorem{lemma}[thm]{Lemma}
\newtheorem{theorem}[thm]{Theorem}
\newtheorem*{corollary*}{Corollary}
\newtheorem*{claim*}{Claim}
\newtheorem{definition}[thm]{Definition}
\newtheorem{remark}[thm]{Remark}
\newtheorem{example}[thm]{Example}
\newtheorem{notation}[thm]{Notation}
\newtheorem {construction}[thm]{Construction}
\begin{document}
\title{ Poisson $\cin$-schemes.} 
 
\author{ Eugene Lerman}


\setcounter{tocdepth}{1}

\begin{abstract} We introduce Poisson $\cin$-rings and  Poisson local
  $\cin$-ringed spaces.  We show that the spectrum of a Poisson
  $\cin$-ring is an affine Poisson $\cin$-scheme.    We then discuss
  applications that include singular symplectic and Poisson
  reductions, singular quasi-Poisson reduction, coisotropic reduction
  and Poisson-Dirac subschemes.

  \end{abstract}
\maketitle
\tableofcontents

\section{Introduction}

In early 1970s Meyer \cite{Meyer} and, independently, Marsden and
Weinstein \cite{MW} developed a mathematical framework that connected
symmetries of Hamiltonian
systems with 
reduction of the number of degrees of freedom.  Their work lead to a
flurry of activity that persisted for several decades. It was quickly
observed that in many interesting cases the Meyer-Marsden-Weinstein
reduced spaces (a.k.a.\ symplectic quotients) are singular. Unlike
algebraic geometry where singular varieties and schemes make perfectly
good sense, a singular manifold is an oxymoron.  While singular spaces
arose in differential geometry long before symplectic reduction and a
wide variety of ways of dealing with singularities has been developed
(which I will not attempt to survey), there is no consensus about the
``best'' approach.

In the case of singular symplectic quotients the approaches roughly
fall into three categories:  ``geometric"
``algebraic'' (cf.\ \cite{AGJ}) and 
``homological'' \cite{Stasheff}.
In this paper we connect geometric and algebraic approaches using 
tools of algebraic geometry over $\cin$-rings \cite{Dubuc, Joy}.   
(The connections with homological approach should come out of derived differential geometry once it is well developed.)
From the $\cin$-ring point of view the algebraic approach results in 
Poisson $\cin$-rings (a concept that we introduce in this paper) while
geometric approach results in spaces with a structure sheaf of
Poisson $\cin$-rings.   We show that Dubuc's spectrum functor from
the category of  $\cin$-rings to the category of  local $\cin$-ringed spaces send Poisson
$\cin$-rings to Poisson ringed spaces (see Theorem~\ref{thm:3.19}).
This unifies algebraic and geometric views of reduction in symplectic
and Poisson geometry.   It also makes singular reduction a lot simpler
since there is no need to worry about stratifications.   With almost
no extra work we also makes sense of singular  quasi-Poisson
reduction as Poisson $\cin$-schemes.  Other applications
include coisotropic reduction in Poisson geometry and Poisson-Dirac
submanifolds and,  more generally, subschemes.

\mbox{}\\
Recall that a $\cin$-ring is a product preserving functor from the
category of coordinate vector spaces $\R^n$, $n\in \N$, and smooth maps to the
category of sets \cite{MR, Joy}.  Equivalently a $\cin$-ring is a set
$\scA$ together with a collection of $n$-ary operations, $n\geq 0$,
one $n$-ary operation
\[
  f_\scA:\scA^n\to \scA
\]
for each smooth function $f\in \cin(\R^n)$.  These operations are
associative.  Maps between $\cin$-rings are natural transformation
between functors.  Equivalently they are maps of sets preserving the
operations.  It follows quickly from the definition that any
$\cin$-ring has an underlying $\R$-algebra.  A $\cin$-ring derivation
$v:\scA\to \scA$ of a $\cin$-ring $\scA$ is an $\R$-linear map that
satisfies a generalization of the Leibniz rule. Namely, for any $n$,
for any $f\in \cin(\R^n)$ and for any $a_1,\ldots, a_n\in \scA $ one
requires that
\begin{equation}
v(f_\scA(a_1,\ldots, a_n)) = \sum_{i=1}^n (\partial_i
f)_\scA(a_1,\ldots, a_n) \cdot v(a_i).
\end{equation}
For example, if $M$ is a manifold then its algebra $\cin(M)$ of smooth
functions is a $\cin$-ring with the operations are given by
compositions. Namely,  for all $n$, all $f\in \cin(\R^n)$ and all $a_1,\ldots,
a_n\in \cin(M) = \cin(M, \R)$
\[
f_{\cin(M)} (a_1, \ldots, a_n) := f\circ (a_1,\ldots, a_n).
\]  
And then a vector field $v$ on a manifold $M$ is a $\cin$-ring
derivation since
\[
v( f\circ (a_1,\ldots, a_n) ) = \sum_{i=1}^n \big(\partial_i
f\circ (a_1,\ldots, a_n) \big)\cdot v(a_i).
\]
Recall that a Poisson algebra $A$ over a commutative ring $k$ is an
associative (and often commutative) $k$-algebra with a compatible Lie
algebra bracket $\{\cdot, \cdot\}:A\times A\to A$ such that for each
$a\in A$ the map $\{a, \cdot \} : A\to A$ is a $k$-algebra derivation.
Its is well-known \cite{Polishchuk, Ka1} that the algebra-geometric
spectrum of a Poisson algebra is a Poisson scheme, that is, a local
ringed space whose structure sheaf is a sheaf of Poisson algebras.
Poisson schemes have proven to be useful in algebraic geometry and
representation theory. However, if one starts with a Poisson manifold
$M$ and regards its algebra of functions $\cin(M)$ as Poisson algebra
over $\R$ then the associated Poisson scheme is not the original
manifold with its sheaf of smooth functions.  Of course one may feel
that there is no need to view Poisson manifolds as affine Poisson
schemes, and it's hard to argue with this point of view.  On the other
hand the Sniatycki-Weinstein algebraic reduction \cite{SW} produces a
Poisson algebra over $\R$, which is a Poisson algebra of a manifold
under a regularity assumption.  However, in general this Poisson
algebra need not be an algebra of functions on any space \cite{AGJ}.
It is natural to wonder if this algebra is the algebra of global
sections of a sheaf on some space.  This question was the original
motivation for undertaking the work in this paper.

Additionally, as was mentioned above, in symplectic and Poisson geometry there
often arise geometric objects that are singular but are not algebraic
varieties or schemes in any natural way.  Examples include singular symplectic quotients
\cite{ACG, SL}, singular Poisson quotients, closures of coadjoint
orbits of real Lie groups and, more generally, closures of symplectic
leaves in Poisson manifolds, and so on.   The techniques developed in
this paper give a novel point of view of such objects --- see Section~\ref{sec:ex}.

\subsection*{Organization of the paper} \quad In Section~\ref{sec:pcr}
we define Poisson $\cin$-rings.  We show that if a $\cin$-ring $\scA$
is Poisson and $I\subset \scA$ is a Poisson  ideal then the quotient
$\scA/I$ is again a Poisson $\cin$-ring.  The point here that the
induced bracket on $\scA/I$ is a $\cin$-ring biderivation, and not
just an $\R$-algebra biderivation.  We then give examples of such
quotients and relate one of them to the ``universal'' reduction of
Arms, Cushman and Gotay \cite{ACG}.  We also show that if $\scA$ is a
Poisson $\cin$-ring and $I\subset \scA$ is a multiplicative ideal then
its Poisson normalizer $N(I)$ is a $\cin$-subring of $\scA$.

In Section~\ref{sec:scheme} we review Joyce's construction of Dubuc's
spectrum functor
\[
\Spec: \op{\cring} \to \LCRS
\]  
($\cring$ is the category of $\cin$-rings, $\LCRS$ is the category of
local $\cin$-ringed spaces) and prove that if $\scA$ is a Poisson
$\cin$-ring then the affine $\cin$-scheme $\Spec(\scA)$ is Poisson
(i.e., its structure sheaf is a sheaf of {\em Poisson} $\cin$-rings).
We also prove that if $\varphi:\scA\to \scB$ is a map of Poisson
$\cin$-rings then $\Spec(\varphi):\Spec(\scB) \to \Spec(\scA)$ is
Poisson.

In Section~\ref{sec:ex} we discuss various applications of
Theorem~\ref{thm:3.19}: symplectic reduction, Poisson reduction,
quasi-Poisson reduction, coisotropic reduction in the setting of
Poisson manifolds, Snyaticki-Weinstein reduction as a Poisson
$\cin$-scheme and Poisson-Dirac submanifolds and subschemes.

In Section~\ref{sec:compute} we address the question of getting one's
hands on $\Spec(\scA)$.  We show that if a $\cin$-ring $\scA$ is finitely generated
(as a $\cin$-ring!) then the space $X_\scA$ of its points embeds as a
closed subspace in some $\R^N$ (the embedding depends on the choice of
generators).  If $G\times M\to M$ is an action of a Lie group on
a manifold $M$ then the $\cin$-ring $\cin(M)^G$ of invariant functions
may or may not be finitely generated (if $G$ is compact and the number
of orbit types is finite then $\cin(M)^G$ is finitely generated by a
nontrivial theorem of Schwarz \cite{Sch}).   Regardless of $\cin(M)^G$
being finitely generated we show that for proper actions (of not
necessarily compact) Lie groups the space $X_{\cin(M)^G}$
of points the $\cin$-ring $\cin(M)^G$ of invariants is homeomorphic to the
orbit space $M/G$ and consequently $\Spec(\cin(M)^G)$ is isomorphic to
$(M/G, \cin_{M/G})$ where $\cin_{M/G} \subset C^0_{M/G}$ is a sheaf of
``smooth'' functions on the orbit space. This implies, in particular, if $M$ is Poisson,
the action of $G$ on $M$ is Hamiltonian, $\mu:M\to \fg^\vee$ is an
equivariant moment map, $\alpha\in \fg^\vee$ and $I_\alpha \subset
\cin(M)^G$ is the vanishing ideal of the level set $\mu\inv (\alpha)$
then the set of points $X_{\cin(M)^G/I_\alpha}$ embeds into the orbit
space $M/G$ as a closed subspace and the $\cin$-Poisson scheme
$\Spec(\cin(M)^G/I_\alpha)$ ``is'' a closed subscheme of $(M/G, \cin_{M/G})$.  The same result, {\em mutatis
  mutandis} holds if $M$ is a quasi-Poisson manifold.

In Section~\ref{sec:CW} we discuss the Cushman-Weinstein conjecture
\cite{Eg-th, Eg-era, SL}
on embedding of symplectic quotients into Euclidean spaces as
``Poisson subvarieties.''

\section{Poisson $\cin$-rings} \label{sec:pcr}
In the introduction we sketched a definition of a $\cin$-ring.  We
omitted the definition of $\cin$-ring morphisms and of subrings of
$\cin$-rings.  See \cite{MR, Joy} for a more careful discussion.  In
place of such a discussion we introduce some notation, which is not
entirely standard.  It is, however, convenient.
\begin{notation} $\cin$-rings form a category $\cring$.  The objects of this
category are $\cin$-rings and morphisms are functions between
$\cin$-rings that preserve all the operations. That is, $\varphi:\scA \to
\scB$ is a map of $\cin$-rings if for every $n\geq 0$, all $f\in
\cin(\R^n)$ and all $a_1,\ldots, a_n\in \scA$
\[
\varphi(f_\scA (a_1,\ldots, a_n) = f_\scB (\varphi(a_1),\ldots, \varphi(a_n)).
\]
Here, as in the introduction, $f_\scA:\scA^n\to \scA$ is the operation
defined by the function $f$.
\end{notation}

Similarly a {\sf $\cin$-ringed space} is a pair $(X, \scO_X)$ where
$X$ is a topological space and $\scO_X$ is a sheaf of $\cin$-rings.
It is a {\sf local} $\cin$-ringed space if all the stalks of the sheaf
$\scO_X$ are local $\cin$-rings (see Definition~\ref{def:local_rs}
below).  A map/morphism $\uu{f}: (X, \scO_X) \to a (Y,\scO_Y)$ from a
local $\cin$-ringed space $(X, \scO_X)$ to a local $\cin$-ringed space
$ (Y,\scO_Y)$ is a pair of maps $(f, f_\#)$ where $f:X\to Y$ is a
continuous map and $f_\#: \scO_Y\to f_* \scO_\X$ is a map of sheaves
of $\cin$-rings.  Note that unlike ordinary algebraic geometry the map
on stalks induced by $f_\# $ automatically preserves the unique maximal ideals.
\begin{notation}
We denote the category of local $\cin$-ringed spaces and their maps by $\LCRS$.
\end{notation}  
\begin{definition}
A {\sf module } over a $\cin$-ring $\scA$ is a module over the
underlying $\R$-algebra (cf.\ \cite{Joy, LdR}).
\end{definition}

\begin{definition}
Let $\scM$ be a module over a $\cin$-ring $\scA$.  A {\sf $\cin$-ring
derivation} (or a {\sf $\cin$-derivation} for short) with values in
the module 
$\scM$ is an $\R$-linear map $v:\scA\to \scM$ so that for any $n$, any $f\in
\cin(\R^n)$ and any $a_1,\ldots, a_n\in \scA$
\[
v\left(f_\scA(a_1,\ldots,a_n)\right) = \sum_{i=1}^n (\partial_i
  f)_\scA(a_1,\ldots, a_n)\cdot v (a_i).
\]  
\end{definition}
\begin{definition} \label{def:Poisson_ring}
A {\sf Poisson $\cin$-ring} is a $\cin$-ring $\scA$ together with a
Poisson bracket $\{\cdot, \cdot\}: \scA\times \scA \to \scA$ on the
underlying $\R$-algebra so that for any $a\in \scA$ the map
\[
ad(a)\equiv \{a, \cdot \}: \scA\to \scA, \quad b\mapsto \{a, b\}
\]  
is a $\cin$-ring derivation: for any $n>0$, any $f\in \cin(\R^n)$ and any $a_1,\ldots, a_n\in
\scA$
\begin{equation} %
  \left\{a, f_\scA(a_1,\ldots,a_n) \right\} = \sum_{i=1}^n (\partial_i
  f)_\scA(a_1,\ldots, a_n)\cdot \{ a, a_i\}.
 \end{equation} 
\end{definition}

\begin{example}
Let $(P, \pi\in \Gamma \Lambda^2 TM)$ be a Poisson manifold and
$\{f,g\} = \langle df\wedge dg , \pi\rangle $ the corresponding
bracket.  Then $(\cin(P), \{ \cdot, \cdot \})$ is a $\cin$-Poisson
ring.  This is because $ad(f)$ is a vector field for every $f\in \cin(P)$.
\end{example}  

\begin{definition}
A {\sf map} of Poisson $\cin$-rings is a map of $\cin$-rings that
preserve the Poisson brackets.
\end{definition}

We define subalgebras of Poisson $\cin$-rings as in the
case of ordinary Poisson algebras  {\em
  mutatis mutandis}:
\begin{definition}
A {\sf Poisson $\cin$-subring} (i.e., a Poisson subalgebra) of a Poisson $\cin$-ring $(\scA, \{\cdot,
\cdot\})$ is a $\cin$-subring $\scB$  of $\scA$ which is closed under
the Poisson brackets.
\end{definition}

\begin{example}
Let $(M, \pi)$ be a Poisson manifold with an action of a Lie group $G$
that preserves the Poisson tensor $\pi$.  Then the algebra $\cin(M)^G$ of
$G$-invariant functions is a $\cin$-subring of $\cin(M)$ and a Poisson
$\cin$-subring of $(\cin(M), \{\cdot, \cdot\})$.
\end{example}

Recall that an {\sf ideal} $I$ in a $\cin$-ring $\scA$ is an ordinary
ideal in the $\R$-algebra underlying $\scA$.  Recall also that if $I$
is an ideal in the $\cin$-ring $\scA$ then the quotient $\scA/I$ is
naturally a $\cin$-ring with the operations
\[
f_{\scA/I}:(\scA/I)^n \to \scA/I
\]
given by
\[
f_{\scA/I} (a_1 +I,\ldots, a_n+I)= f_\scA(a_1,\ldots, a_n) +I
\]
(for all $n\geq 0$, all $f\in \cin(\R^n)$ and all $a_1+I,\ldots,
a_n+I\in \scA/I$). The fact that the operations are well-defined is a consequence of Hadamard's lemma; see \cite{MR}.
\begin{definition}
A {\sf Poisson ideal} of a Poisson $\cin$-ring $\scA$ is an
ideal in (the $\R$-algebra  underlying) $\scA$ so that
\[
\{f, a\} \in I \qquad \textrm{ for all } f\in I, a\in \scA.
\]  
\end{definition}
\begin{definition} \label{def:poisson_sheaf}
A sheaf $\scO$ of $\cin$-rings on a space $X$ is a {\sf sheaf of
  Poisson $\cin$-rings} if for every open set $U\subset X$ the
$\cin$-ring $\scO(U)$ is Poisson and the restriction maps are maps of
Poisson $\cin$-rings.
\end{definition}

\begin{lemma} \label{lem:3.4}
Let $I$ be a Poisson ideal in a Poisson $\cin$-ring $\scA$.  Then the
quotient $\cin$-ring $\scA/I$ is Poisson 
and the projection map $p: \scA \to \scA/I$ is map of
Poisson $\cin$-rings.
\end{lemma}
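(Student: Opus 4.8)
The plan is to define the bracket on $\scA/I$ by the obvious formula, check it is well defined and descends to a genuine Poisson bracket on the underlying $\R$-algebra by the classical argument for ordinary Poisson algebras, and then verify the one new point flagged in the introduction: that each $ad(a+I)$ is a $\cin$-ring derivation and not merely an $\R$-algebra derivation. Since the excerpt already records that $\scA/I$ is a $\cin$-ring, with operations $f_{\scA/I}(a_1+I,\dots,a_n+I)=f_\scA(a_1,\dots,a_n)+I$, most of the work consists of transporting the $\cin$-derivation identity for $ad(a)$ on $\scA$ through the quotient map.

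First I would set $\{a+I,\, b+I\} := \{a,b\}+I$ and check well-definedness. Expanding $\{a+i,\, b+j\}$ by bilinearity, the three error terms $\{a,j\}$, $\{i,b\}$, $\{i,j\}$ all lie in $I$: for $\{i,b\}$ and $\{i,j\}$ this is immediate from the defining property of a Poisson ideal, and for $\{a,j\}$ it follows from the same property together with antisymmetry, $\{a,j\}=-\{j,a\}\in I$. The descended bracket is then automatically $\R$-bilinear, antisymmetric, and satisfies the Jacobi identity and the Leibniz rule, since each of these is an identity inherited from $\scA$ after applying the ($\R$-algebra) projection $p$. In particular $p$ preserves brackets and, being already a map of $\cin$-rings, will be a map of Poisson $\cin$-rings once the target is shown to be one.

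The only genuinely new verification is that for each $a+I$ the map $ad(a+I)=\{a+I,\cdot\}$ satisfies the $\cin$-derivation identity of Definition~\ref{def:Poisson_ring}. To check this I would fix $f\in\cin(\R^n)$ and residues $\bar a_1,\dots,\bar a_n\in\scA/I$ with lifts $a_1,\dots,a_n\in\scA$, and lift the whole identity to $\scA$. Using the quotient operations, $f_{\scA/I}(\bar a_1,\dots,\bar a_n)=f_\scA(a_1,\dots,a_n)+I$, so $\{a+I,\, f_{\scA/I}(\bar a_1,\dots,\bar a_n)\}=\{a,\, f_\scA(a_1,\dots,a_n)\}+I$. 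Because $\scA$ is a Poisson $\cin$-ring, $ad(a)$ is a $\cin$-derivation on $\scA$, and applying $p$ to its defining identity gives
\begin{equation*}
\{a+I,\, f_{\scA/I}(\bar a_1,\dots,\bar a_n)\}=\sum_{i=1}^n\big((\partial_i f)_\scA(a_1,\dots,a_n)+I\big)\cdot\big(\{a,a_i\}+I\big).
\end{equation*}
Recognizing $(\partial_i f)_\scA(a_1,\dots,a_n)+I=(\partial_i f)_{\scA/I}(\bar a_1,\dots,\bar a_n)$ and $\{a,a_i\}+I=\{a+I,\, \bar a_i\}$ yields precisely the required identity on $\scA/I$.

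I expect no serious obstacle: every step is forced once the quotient $\cin$-ring structure is in hand. The one place demanding care is reading the projected expression as the $\cin$-ring operation $(\partial_i f)_{\scA/I}$ applied to the residues, rather than as an ad hoc combination of classes, so that the computed identity is literally the derivation property of Definition~\ref{def:Poisson_ring}; this is exactly where the fact that $\scA/I$ is a $\cin$-ring, and not just an $\R$-algebra, is used. Well-definedness of the bracket, resting on antisymmetry together with the Poisson-ideal hypothesis, is the other point I would state explicitly.
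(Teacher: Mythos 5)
Your proof is correct and follows essentially the same route as the paper: the paper delegates the key step to its Lemma~\ref{lem:2.9} (a quotient of a $\cin$-ring derivation along an invariant ideal is again a $\cin$-ring derivation), and your third paragraph is precisely that computation specialized to $v=ad(a)$, with the well-definedness of the bracket handled by the same antisymmetry-plus-Poisson-ideal observation. The only difference is organizational --- the paper isolates the derivation statement as a reusable lemma, while you inline it.
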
  
\begin{proof} This 
is an easy consequence of Lemma~\ref{lem:2.9}
below.
\end{proof}
\begin{lemma} \label{lem:2.9}
Let $v:\scA\to \scA$ be a $\cin$-ring derivation of a $\cin$-ring
$\scA$ and $I\subseteq \scA$ an ideal with $v(I)\subset I$. Then the
induced map
\[
\bar{v}:\scA/I\to \scA/I, \qquad \bar{v}(a+I): = v(a)+ I
\]  
is a $\cin$-ring derivation.
\end{lemma}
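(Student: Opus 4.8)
The plan is to verify in turn the three defining features of a $\cin$-ring derivation for $\bar v$: its well-definedness, its $\R$-linearity, and the Leibniz-type identity in the definition. Each will be obtained by descending the corresponding property of $v$ through the quotient projection $p:\scA\to\scA/I$.

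First I would check that $\bar v$ is well-defined. If $a+I=a'+I$ then $a-a'\in I$, so by $\R$-linearity of $v$ together with the hypothesis $v(I)\subset I$ we get $v(a)-v(a')=v(a-a')\in I$, i.e.\ $v(a)+I=v(a')+I$. Thus $\bar v(a+I):=v(a)+I$ is independent of the chosen representative. The $\R$-linearity of $\bar v$ is then immediate, since $v$ and $p$ are both $\R$-linear and $\bar v\circ p=p\circ v$ with $p$ surjective.

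The heart of the argument is the derivation identity. Fix $n$, $f\in\cin(\R^n)$ and $a_1,\ldots,a_n\in\scA$. By the definition of the operations on the quotient $\cin$-ring recalled before the statement, $f_{\scA/I}(a_1+I,\ldots,a_n+I)=f_\scA(a_1,\ldots,a_n)+I$, so
\[
\bar v\big(f_{\scA/I}(a_1+I,\ldots,a_n+I)\big)=v\big(f_\scA(a_1,\ldots,a_n)\big)+I.
\]
Applying the derivation property of $v$ and then pushing the resulting expression through $p$ gives
\[
v\big(f_\scA(a_1,\ldots,a_n)\big)+I=\sum_{i=1}^n\Big((\partial_i f)_\scA(a_1,\ldots,a_n)+I\Big)\cdot\big(v(a_i)+I\big),
\]
where I have used that $p$ is a ring homomorphism, so it carries the product and the sum on $\scA$ to the product and sum on $\scA/I$. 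Finally $(\partial_i f)_\scA(a_1,\ldots,a_n)+I=(\partial_i f)_{\scA/I}(a_1+I,\ldots,a_n+I)$ by the definition of the quotient operations, and $v(a_i)+I=\bar v(a_i+I)$, so the right-hand side equals $\sum_{i=1}^n (\partial_i f)_{\scA/I}(a_1+I,\ldots,a_n+I)\cdot\bar v(a_i+I)$, which is the required identity.

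There is no serious obstacle here: the only content is bookkeeping. The one point to keep in mind is that the quotient operations $f_{\scA/I}$ are defined representative-wise (legitimate by Hadamard's lemma, as recalled above) and that $p$ respects both the binary multiplication and the addition of $\scA$. Granting these facts, the derivation property of $\bar v$ is a formal consequence of the derivation property of $v$.
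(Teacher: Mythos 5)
Your proof is correct and follows essentially the same route as the paper's: expand $\bar v\big(f_{\scA/I}(a_1+I,\ldots,a_n+I)\big)$ via the representative-wise definition of the quotient operations, apply the derivation property of $v$, and push the result through the projection to recognize the quotient-ring Leibniz identity. The only addition is your explicit check of well-definedness of $\bar v$ (from $v(I)\subset I$ and linearity), which the paper leaves implicit.
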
  

\begin{proof}
Given $n$, $f \in \cin(\R^n) $ and $a_1,\ldots, a_n \in \scA$ we
compute:
\[
\begin{split}
\bar{v} (f_{\scA/I} (a_1 +I,\ldots, a_n+I) =&\bar{v}( f_\scA(a_1,\ldots,a_n) +I)\\
=&v(f_\scA(a_1,\ldots,a_n)) +I\\
=& \left( \sum (\partial_i f)_\scA(a_1,\ldots, a_n) v(a_i) \right)+I\\
  =&  \sum( (\partial_i f)_\scA(a_1,\ldots, a_n) +I)( v(a_i) +I)\\
=&  \sum (\partial_i f)_{\scA/I} (a_1+I,\ldots, a_n+I) \bar{v}(a_i+I).
\end{split}
\]  
\end{proof}

The next lemma is useful for producing natural examples of Poisson
ideals.  It is a generalization of 
\cite[Lemma 2]{BL}, which, in turn, is a variation on the ideas in \cite{ACG}.

\begin{lemma} \label{lem:2.10}
Let $M$ be  a Poisson manifold,
and ${\cA}$ be a Poisson $\cin$-subring of $C^\infty (M)$.  Suppose that the
Hamiltonian flows of functions in ${\cA}$ preserve a  subset $X$ of the
manifold $M$.  Then the vanishing  ideal
\[
	{\cI}(X) := \{f \in {\cA} : f|_X = 0\}
\]
of  $X$ is a Poisson ideal of ${\cA}$.
\end{lemma}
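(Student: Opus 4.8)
The plan is to verify the two defining requirements of a Poisson ideal separately. First I would check that $\cI(X)$ is an ideal of the $\R$-algebra underlying $\cA$: it is plainly an $\R$-linear subspace, and if $f|_X = 0$ and $g\in\cA$ then $(gf)|_X = 0$, so $\cI(X)$ absorbs multiplication. This part is routine and uses nothing about the Poisson structure. The substantive point is then the bracket condition: for every $f\in\cI(X)$ and every $a\in\cA$ one must show $\{f,a\}\in\cI(X)$. Since $\cA$ is a Poisson $\cin$-subring of $\cin(M)$ it is closed under the bracket, so $\{f,a\}\in\cA$ already; what remains is to prove that $\{f,a\}|_X = 0$.

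The key idea is to read the bracket geometrically as a derivative along a Hamiltonian flow. By antisymmetry $\{f,a\} = -\{a,f\} = -\,ad(a)(f)$, and by the Example following Definition~\ref{def:Poisson_ring} the derivation $ad(a) = \{a,\cdot\}$ is exactly the Hamiltonian vector field of $a$ on $M$, acting on $f$ as a directional derivative. Writing $\varphi^a_t$ for the (local) flow of this vector field, I would record that for every point $x\in X$
\[
\{f,a\}(x) = -\,ad(a)(f)(x) = -\,\frac{d}{dt}\Big|_{t=0} f\bigl(\varphi^a_t(x)\bigr).
\]
Now I invoke the hypothesis: since the Hamiltonian flow of $a\in\cA$ preserves $X$, we have $\varphi^a_t(x)\in X$ for all $x\in X$ and all $t$ near $0$, so the function $t\mapsto f(\varphi^a_t(x))$ is identically zero on a neighborhood of $t=0$ (because $f$ vanishes on $X$), and hence its derivative at $t=0$ vanishes. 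Therefore $\{f,a\}(x)=0$ for every $x\in X$, i.e. $\{f,a\}\in\cI(X)$, which is precisely the Poisson-ideal condition.

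The only genuine subtlety I anticipate is the possible incompleteness of the Hamiltonian flow: $\varphi^a_t$ need not be globally defined, and the natural reading of the hypothesis is that $X$ is invariant under each such flow wherever the flow is defined. This causes no difficulty, since evaluating the derivative at $t=0$ only requires the flow on an arbitrarily short interval about $0$, and the maximal integral curve through $x$ is defined on an open interval containing $0$. I would also note, to keep the pointwise computation legitimate, that the bracket on $\cA$ is simply the restriction of the bracket on $\cin(M)$, so that interpreting $ad(a)(f)$ as a directional derivative on $M$ is valid even though $f$ and $a$ range only over the subring $\cA$; the sign in the identity above is irrelevant to the vanishing conclusion.
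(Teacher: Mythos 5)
Your proof is correct and follows essentially the same route as the paper's: differentiating the vanishing function along the Hamiltonian flow of an element of $\cA$ through a point of $X$, and using invariance of $X$ to conclude the derivative (i.e., the bracket) vanishes there. Your additional remarks on the ideal property, local flows, and the sign from antisymmetry are fine but inessential refinements of the same argument.
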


\begin{proof}
Let  $f$ be in ${\cA}$, $x$ be a point in $X$ and $h$ be in the ideal ${\cI}(X)$.
Let $\gamma (t)$ denote the integral curve of the Hamiltonian vector field
of $f$ with $\gamma (0) =x$.
Then $\gamma (t)$ is in $X$ and consequently  $h(\gamma (t))= 0$ for all $t$.
Differentiation with respect to $t$ yields
$$
0 = \left. \frac{d}{dt}  \right|_0 h(\gamma (t)) = \{f, h\} (x)
$$
Thus $ \{f, h\}|_X =0$, i.e., $ \{f, h\}$ is in the ideal ${\cI}(X)$.
\end{proof}

\begin{example} Let $\fg$ be a real Lie algebra.  Then its dual
  $\fg^\vee$ is canonically a Poisson manifold.  Let $\scO\subset \fg^\vee$
  be a coadjoint orbit.  Since the flows of Hamiltonian vector fields
  on $\fg^\vee$ preserve the orbit $\scO$, the ideal of functions $I =\{ f \in \cin
  (\fg^\vee) \mid f|_\scO = 0\}$ is a Poisson ideal by Lemma~\ref{lem:2.10}.
By Lemma~\ref{lem:3.4} the $\cin$-ring $\cin(\fg^\vee)/I$ is a Poisson
$\cin$-ring.  It's the ring of (Whitney) smooth functions on the
closure $\overline{\scO}$ of the orbit $\scO$: $\cin(\fg^\vee)/I =
\cin(\overline{\scO}$).    It follows that $\cin(\overline{\scO})$ is a Poisson $\cin$-ring.
\end{example}
We can easily generalize the example above.
\begin{example} \label{ex:2.12}
Let $(P, \pi)$ be a Poisson manifold and $X\subset P$ a closed subset which is
a union of symplectic leaves of the Poisson tensor $\pi$.  Then the
ideal $I_X \subset \cin(P)$ of functions that vanish on $X$ is a
Poisson ideal in the $\cin$-Poisson algebra $\cin(P)$.
\end{example}  

Another application of Lemma~\ref{lem:2.10} gives  us 
the so called ``universal reduction''  of Arms, Cushman and Gotay \cite{ACG}:

\begin{example} \label{ex:ACG}
Let $(M, \omega)$ be a symplectic manifold with a Hamiltonian action of a Lie group $G$ and let $\mu:M\to \fg^\vee$ denote the corresponding equivariant moment map.
Let $\alpha \in\fg^\vee$ be a point.  Consider the set
\[
I_\alpha := \{f \in \cin(M)^G\mid f|_{\mu\inv (\alpha)} =0\},
\]
the set of all invariant functions that vanish on the level set $\mu\inv (\alpha)$.   Since the moment map $\mu$ is constant along the 
integral curves of all invariant functions $f\in \cin(M)^G$, the flows of invariant functions preserve $\mu\inv (\alpha)$.  By Lemma~\ref{lem:2.10} the ideal $I_\alpha$ is a Poisson ideal in the Poisson $\cin$-ring $\cin(M)^G$.   Therefore the quotient $\cin$-ring $\cin(M)^G/I_\alpha$ is a Poisson $\cin$-ring.
\end{example}

\begin{remark} \label{rmrk:2.14}
Any element of the quotient $\cin$-ring $\cin(M)^G/I_\alpha$  gives rise to a well-defined continuous function on the quotient 
space $\mu\inv (\alpha)/G_\alpha$ where $G_\alpha$ is the stabilizer
of $\alpha\in \fg^\vee$ (under the coadjoint action).  In the next
section (Theorem~\ref{thm:3.19}) we will see that one can associate to
any Poisson $\cin$-ring $\scA$ a space $X_\scA$ with a sheaf of
Poisson $\cin$-rings $\scO_\scA$.  If the action of a Lie group $G$ on
a manifold $M$ is nice enough, the two spaces turn out to be homeomorphic.  We will prove it in Section~\ref{sec:compute}.
\end{remark}

\begin{definition}
Let $(\scA, \{\cdot, \cdot\})$ be a Poisson $\cin$-ring.  The {\sf
  normalizer} $N(S)$ of a subset $S\subset \scA$ is the set
\[
  N(S) := \{ a\in \scA \mid \{s, a\}\in S\textrm{ for all }s\in S\}.
\]
\end{definition}

\begin{lemma} \label{lem:SW}
Let $(\scA, \{\cdot, \cdot\})$ be a Poisson $\cin$-ring, $\scI \subset
\scA$ an ideal in the commutative $\R$-algebra underlying $\scA$. 
The normalizer  $N(\scI)$ of $\scI$ is a
$\cin$-subring of $\scA$. Consequently
\[
\scB: = N(\scI)/(\scI \cap N(\scI))
\]
is a Poisson $\cin$-ring.
\end{lemma}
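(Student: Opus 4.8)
The plan is to prove the two assertions in sequence: first that $N(\scI)$ is a $\cin$-subring, then that the Poisson bracket descends to the quotient $\scB$. The decisive point throughout is that each map $\{h,\cdot\}$ with $h\in\scI$ is not merely an $\R$-algebra derivation but a genuine $\cin$-ring derivation — this is exactly the defining property of a Poisson $\cin$-ring recorded in Definition~\ref{def:Poisson_ring}, and it is what upgrades the classical computation for ordinary Poisson algebras to the $\cin$-setting.

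First I would show that $N(\scI)$ is closed under every $\cin$-operation. Fix $a_1,\dots,a_n\in N(\scI)$, a function $f\in\cin(\R^n)$, and an element $h\in\scI$; the goal is $\{h,f_\scA(a_1,\dots,a_n)\}\in\scI$. The derivation property of $\{h,\cdot\}$ gives
\[
\{h,f_\scA(a_1,\dots,a_n)\}=\sum_{i=1}^n(\partial_i f)_\scA(a_1,\dots,a_n)\cdot\{h,a_i\}.
\]
Since each $a_i\in N(\scI)$ we have $\{h,a_i\}\in\scI$, and because $\scI$ is an ideal of the underlying $\R$-algebra every summand — hence the whole sum — lands in $\scI$. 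As $h\in\scI$ was arbitrary, $f_\scA(a_1,\dots,a_n)\in N(\scI)$, so $N(\scI)$ is a $\cin$-subring. (Nullary operations, i.e.\ constants, cause no trouble since $\cin$-derivations annihilate them.) Next I would verify that $N(\scI)$ is in fact a Poisson $\cin$-subring, i.e.\ closed under the bracket. For $a,b\in N(\scI)$ and $h\in\scI$, the Jacobi identity yields
\[
\{h,\{a,b\}\}=\{\{h,a\},b\}+\{a,\{h,b\}\}.
\]
Here $\{h,a\},\{h,b\}\in\scI$ because $a,b$ normalize $\scI$; then $\{\{h,a\},b\}$ and $\{a,\{h,b\}\}=-\{\{h,b\},a\}$ again lie in $\scI$, once more because $a,b\in N(\scI)$ while $\{h,a\},\{h,b\}\in\scI$. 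Thus $\{a,b\}\in N(\scI)$, and $N(\scI)$ is a Poisson $\cin$-subring of $\scA$.

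Finally I would pass to the quotient. The intersection $\scI\cap N(\scI)$ is an ideal of the $\cin$-subring $N(\scI)$: the product of an element of $\scI\cap N(\scI)$ with any element of $N(\scI)$ lands in $\scI$ (as $\scI$ is an ideal of $\scA$) and in $N(\scI)$ (as $N(\scI)$ is a subring). I claim it is moreover a Poisson ideal of $N(\scI)$: given $f\in\scI\cap N(\scI)$ and $a\in N(\scI)$, the normalizer condition with $s=f\in\scI$ gives $\{f,a\}\in\scI$, while closure of $N(\scI)$ under the bracket (just established) gives $\{f,a\}\in N(\scI)$, so $\{f,a\}\in\scI\cap N(\scI)$. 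Applying Lemma~\ref{lem:3.4} to the Poisson $\cin$-ring $N(\scI)$ and its Poisson ideal $\scI\cap N(\scI)$ then shows that $\scB=N(\scI)/(\scI\cap N(\scI))$ is a Poisson $\cin$-ring.

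As for the main obstacle: none of these steps is computationally demanding, but the one place where the $\cin$-structure enters in an essential way — and where I would be most careful — is the first step. That $N(\scI)$ is stable under arbitrary smooth operations $f_\scA$, and not merely under sums and products, rests entirely on $\{h,\cdot\}$ being a $\cin$-derivation rather than just an $\R$-algebra derivation. The remaining ingredients (the Jacobi computation, the Poisson-ideal check, and the invocation of Lemma~\ref{lem:3.4}) are purely formal and proceed exactly as for ordinary Poisson algebras.
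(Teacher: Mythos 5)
Your proof is correct and its key step is exactly the paper's: you use the fact that $\{h,\cdot\}$ is a $\cin$-ring derivation for $h\in\scI$ to expand $\{h,f_\scA(a_1,\dots,a_n)\}$ and conclude that $N(\scI)$ is closed under all smooth operations. The paper stops there and leaves the ``consequently'' implicit, whereas you spell out the remaining formal steps (closure of $N(\scI)$ under the bracket via Jacobi, $\scI\cap N(\scI)$ being a Poisson ideal, and the appeal to Lemma~\ref{lem:3.4}); these are correct and match what the paper intends.
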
  

\begin{proof}
Given $n>0$, $h\in \cin(\R^n)$, $f_1, \ldots f_n \in N(\scI)$ and
$j\in \scI$ we want to show that $\{j, h_\scA(f_1,\ldots f_n)\} \in
N(\scI)$.  
 Since 
$\{j, \cdot\} :\scA\to \scA$ is a $\cin$-ring derivation,
\[
\{j,  h_\scA(f_1,\ldots f_n)\} = \sum_{i=1}^n (\partial_i h)_\scA
(f_1,\ldots f_n) \{j, f_i\}.
\]  
Since $\scI$ is an ideal and since $\{j, f_i\}\in \scI$ for all $i$, it follows that $\{j, h_\scA(f_1,\ldots f_n)\} \in
N(\scI)$ as desired.
\end{proof} 

\begin{example}[Sniatycki-Weinstein algebraic reduction \cite{SW}] \label{ex:SW}
Let $(M, \omega)$ be a symplectic manifold with a Hamiltonian action
of a connected Lie group $G$ and associated equivariant moment map
$\mu:M\to \fg^\vee$.  Denote the canonical pairing $\fg^\vee\times \fg\to
\R$ by $\langle \cdot, \cdot \rangle$.  For every element $X$ in the
Lie algebra $\fg$ we have a smooth function $\mu_X := \langle \mu
,X\rangle:M\to \R$.  Let $\scI $ be the ideal in the $\cin$-ring
$\cin(M)$ generated by the set $\{ \mu_X \mid X\in \fg\}$.  Since
$\mu$ is equivariant, $\{\mu_X, \mu_Y\} = \mu_{[X,Y]}$ for all $X,
Y\in \fg$.  Consequently $\scI$ is a Poisson subalgebra of $\cin(M)$.
Sniatycki and Weinstein define the {\sf reduced algebra} associated to
the data $(M, \omega, \mu)$ to be the algebra
\[
    \scB = (\cin(M)/ \scI)^G
\]
of $G$-invariant elements of the quotient $\cin(M)/\scI$.    They then
argue \cite[Lemma 2]{SW} that $\scB \simeq N(\scI)/\scI$, where as in
Lemma~\ref{lem:SW}, $N(\scI)$ is the normalizer of the ideal $\scI$ in
the Poisson algebra $\cin(M)$.   By Lemma~\ref{lem:SW}, the Poisson
algebra $\scB$ is a Poisson $\cin$-ring.
\end{example}

Sniatycki and Weinstein ({\em op. cit.}) give an example of their reduced Poisson
algebra that is not the Poisson algebra of a manifold. Arms, Gotay and
Jennings \cite{AGJ}  give further examples where the reduced Poisson
algebra is not an algebra of functions on a space.

As was noted in Remark~\ref{rmrk:2.14} one can attach to the
 Sniatycki-Weinstein reduced algebra $\scB = N(\scI)/\scI$ (which is a Poisson $\cin$-ring) 
 a space with a sheaf of Poisson $\cin$-rings.  See
 Example~\ref{ex:SW2} for further discussion.

\section{Poisson $\cin$-schemes} \label{sec:scheme}

The main result of this section is Theorem~\ref{thm:3.19} and its relative version, Lemma~\ref{lem:3.23}: the
spectrum of a Poisson $\cin$-ring is a Poisson $\cin$-scheme and if $\varphi:\scA \to \scB$ is a map of Poisson $\cin$-rings then $\Spec(\varphi) :\Spec(\scB) \to \Spec(\scA)$ is a map of Poisson $\cin$-schemes.  
 The proofs use the details of Joyce's construction \cite{Joy}
of Dubuc's spectrum functor $\Spec$ from the category $\op{\cring}$
  (the category opposite to the category of $\cin$-rings) to local
$\cin$-ringed spaces $\LCRS$ (which will be defined below).  
Note that unlike the
corresponding functor in algebraic geometry, Dubuc's 
\[
\Spec:
\op{\cring} \to \LCRS
\]
 is not
fully faithful.
We start by reviewing some definitions that are used in Joyce's construction.
\begin{definition} \label{def:local_rs}
An {\sf
  $\R$-point} of a $\cin$-ring $\scA$ is a map of $\cin$-rings
$p:\scA \to \R$.

A $\cin$-ring $\scA$ is {\sf local} if it has a
unique $\R$-point.  Equivalently there exists a unique maximal ideal
$M\subseteq \scA$ so that the quotient $\scA/M$ is isomorphic to $\R$.

A $\cin$-ringed space $(X, \scO_X)$ is a {\sf local $\cin$-ringed
  space} if all the stalks of the structure sheaf $\scO_X$ are local
$\cin$-rings.
\end{definition}

By analogy with algebraic geometry we define a Poisson
local $\cin$-ringed space as follows.

\begin{definition}
A {\sf Poisson local $\cin$-ringed space} %
is a local $\cin$-ringed
space $(X, \scO_X)$ so that the structure sheaf $\scO_X$ is a sheaf of
Poisson $\cin$-rings (cf.\ Definition~\ref{def:poisson_sheaf}).
\end{definition}

\begin{notation} \label{not:2.13}
We denote the set of all $\R$-points of a $\cin$-ring $\scA$ by
$X_\scA$.
Thus
\[
X_\scA: =\{ p:\scA\to \R\mid p \textrm{ is a map of $\cin$-rings}\}
\equiv \Hom(\scA, \R).
\]  
\end{notation}

\begin{remark} \label{rmrk:Milnors}
It is well-known that if $M$ is a (second-countable Hausdorff)
manifold then the map
\begin{equation} \label{eq:2.1}
M\to \Hom(\cin(M), \R),\qquad p\mapsto \ev_p
\end{equation}
is a bijection (here and below $\ev_p (f) = f(p)$ for all $f\in
\cin(M)$). This is a theorem of Pursell \cite[Chapter
8]{Pursell}. It is often referred to as Milnor's exercise.

The surjectivity of \eqref{eq:2.1} follows from the fact that
$\cin(M)$ contains a proper function  combined with Lemma~\ref{lem:proper}.  The injectivity of \eqref{eq:2.1} is the fact that smooth
functions on Hausdorff manifolds separate points.
\end{remark}

\begin{construction}[The Zariski topology $\scT_\scA$ on the set
  $X_\scA$ of $\R$-points of a $\cin$-ring $\scA$] \label{constr:Z}
  The set $X_\scA$ of $\R$-points of a $\cin$-ring $\scA$ comes
  equipped with a natural topology, the {\sf Zariski topology}.  It is
  defined as follows: for $a\in \scA$ let
\[
  U_a: = \{p\in X_\scA \mid p(a) \not = 0\}.
\]
Since for a point $p$ of $\scA$ and $a,b\in \scA$
\[
p(ab) \not = 0 \quad \Leftrightarrow \quad p(a) \not = 0 \textrm{ and
} p(b) \not = 0,
\]
we have $U_{ab} = U_a \cap U_b$. Hence the collection  $
\{ U_a\}_{a\in \scA}$ of sets
is a basis for a topology on $X_\scA$, which we denote by $\scT_\scA$.
\end{construction}

\begin{remark} \label{rmrk:2.Whit}
By a theorem of Whitney any closed subset $C$ of a smooth (Hausdorff
and paracompact) manifold $M$
is the set of zeros of some smooth function $f\in \cin(M)$.
It follows that the bijection \eqref{eq:2.1} is a homeomorphism
between the manifold $M$ with its given topology and the set
$X_{\cin(M)}$ of $\R$-points of $\cin(M)$ with the Zariski topology $\scT_{\cin(M)}$.
\end{remark}  

\begin{remark} \label{rmrk:3.6}
Unlike the Zariski topology in algebraic geometry the topology $\scT_\scA$ of 
Construction~\ref{constr:Z} is much nicer.  It is Hausdorff and, additionally, it is 
 completely
regular: for any closed set $C\subset X_\scA$
and for any point $x\not \in C$ there is a continuous function
$f:X_\scA\to \R$ with $f(x) \not = 0$ and $f|_C = 0$, see \cite{Joy}.
\end{remark}
To construct the functor $\Spec$ on objects, given a $\cin$-ring
$\scA$  we need to equip the 
topological space $X_\scA$ of its points with a sheaf $\scO_\scA$ of
$\cin$-rings. To construct this structure sheaf  $\scO_\scA$ we need to recall a
few things about localizations of $\cin$-rings.

\begin{lemma} \label{lem:2.21}
Given a $\cin$-ring $\scA$ and a set $\Sigma$ of nonzero elements of $\scA$
there exists a $\cin$-ring $\scA\{\Sigma\inv\}$ and a map $\gamma:
\scA\to \scA\{\Sigma\inv\}$ of $\cin$-rings with the following universal property:
\begin{enumerate}
\item $\gamma(a)$ is invertible in $\scA\{\Sigma\inv\}$ for all $a\in
  \Sigma$ (i.e., there is $b\in \scA\{\Sigma\inv\}$ such that $\gamma(a) b =1$) and 
\item  for
any map $\varphi:\scA\to \scB$ of $\cin$-rings so that $\varphi(a)$ is
invertible in $\scB$ for all $a\in \Sigma$ there exists a unique map
$\overline{\varphi} :\scA\{\Sigma\inv\} \to \scB$ of $\cin$-rings making the diagram
\[
  \xy
(-10,10)*+{\scA }="1";
(14,10)*+{\scB}="2";
(-10, -6)*+{\scA\{\Sigma\inv\}}="3";
{\ar@{->} ^{\varphi} "1";"2"};
{\ar@{->}_{\gamma} "1";"3"};
{\ar@{->}_{\overline{\varphi}} "3";"2"};
\endxy
\]  
commute.
\end{enumerate}
\end{lemma}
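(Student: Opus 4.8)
The plan is to construct $\scA\{\Sigma\inv\}$ by formally adjoining an inverse $y_a$ for each $a\in\Sigma$ and then imposing the relation $\gamma(a)\,y_a=1$, exactly as one inverts elements in ordinary commutative algebra, but carried out inside the category $\cring$ so that the result is a genuine $\cin$-ring rather than merely an $\R$-algebra. Two structural facts are all I need beyond the definitions already recalled. First, $\cring$ is cocomplete (it is the category of algebras for a Lawvere theory; see \cite{MR, Joy}); in particular coproducts exist, and the free $\cin$-ring on a set of generators exists, with the property that a $\cin$-ring map out of the free $\cin$-ring on a set $S$ into $\scB$ is precisely a set map $S\to\scB$ (the images of the generators, i.e.\ of the coordinate functions when $S$ is finite and the free ring is $\cin(\R^{|S|})$). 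Second, as recalled above in the discussion of quotient $\cin$-rings (a consequence of Hadamard's lemma, \cite{MR}), the quotient of a $\cin$-ring by an ideal of its underlying $\R$-algebra is again a $\cin$-ring, and the projection is a map of $\cin$-rings.

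First I would form the $\cin$-ring $\scB' := \scA \ot \scF$, the coproduct of $\scA$ with the free $\cin$-ring $\scF$ on one generator $y_a$ for each $a\in\Sigma$. Let $\gamma':\scA\to\scB'$ be the structure map into the coproduct, and continue to write $y_a\in\scB'$ for the image of the corresponding generator. Let $I\subseteq\scB'$ be the ideal of the underlying $\R$-algebra generated by the set $\{\gamma'(a)\,y_a - 1 : a\in\Sigma\}$, set $\scA\{\Sigma\inv\} := \scB'/I$, and let $\gamma:\scA\to\scA\{\Sigma\inv\}$ be the composite of $\gamma'$ with the projection $\scB'\to\scB'/I$. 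By the second fact above this is a $\cin$-ring and $\gamma$ is a $\cin$-ring map. Property (i) is then immediate: writing $\bar y_a$ for the class of $y_a$, the relation $\gamma(a)\,\bar y_a = 1$ holds by construction, so $\gamma(a)$ is invertible with inverse $\bar y_a$.

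For the universal property (ii), suppose $\varphi:\scA\to\scB$ is a map of $\cin$-rings with $\varphi(a)$ invertible for every $a\in\Sigma$. By the first fact, the map $\varphi:\scA\to\scB$ together with the assignment $y_a\mapsto\varphi(a)\inv$ determines a unique $\cin$-ring map $\psi:\scB'\to\scB$ out of the coproduct. Each generator of $I$ is sent to $\varphi(a)\,\varphi(a)\inv - 1 = 0$, so $\psi(I)=0$ and $\psi$ descends to a $\cin$-ring map $\overline{\varphi}:\scA\{\Sigma\inv\}\to\scB$ with $\overline{\varphi}\circ\gamma=\varphi$. For uniqueness, any $\cin$-ring map making the triangle commute must send $\gamma(a)\mapsto\varphi(a)$, and applying it to $\gamma(a)\,\bar y_a=1$ forces $\bar y_a\mapsto\varphi(a)\inv$, inverses in a commutative ring being unique. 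Since $\scB'=\scA\ot\scF$ is generated as a $\cin$-ring by $\gamma'(\scA)$ together with the $y_a$, the quotient $\scA\{\Sigma\inv\}$ is generated by $\gamma(\scA)$ together with the $\bar y_a$, so the map is completely determined.

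The step I expect to require the most care is the structural input of the first paragraph: one must genuinely work in $\cring$, using that it is cocomplete with the expected free objects, rather than forming the coproduct of underlying $\R$-algebras. This is what guarantees that the adjoined inverses carry a full $\cin$-ring structure and, crucially, that the factoring map $\psi$ preserves all the $\cin$-operations $f_\scB$, not merely addition and multiplication. Once the construction is placed inside $\cring$ and one invokes that $\scB'/I$ is again a $\cin$-ring, the verification of (i) and (ii) is purely formal.
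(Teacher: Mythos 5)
Your proposal is correct, and it coincides with the paper's own approach in the only sense available: the paper offers no argument of its own but simply cites Moerdijk--Reyes \cite[p.~23]{MR}, and the construction given there is precisely yours --- form the coproduct of $\scA$ with a free $\cin$-ring on generators indexed by $\Sigma$, quotient by the ideal generated by the elements $\gamma'(a)\,y_a-1$, and verify the universal property formally. In effect you have supplied the details that the paper outsources to the reference, including the two structural inputs (cocompleteness of $\cring$ with the expected free objects, and stability of $\cin$-rings under quotients by $\R$-algebra ideals) that make the formal verification legitimate.
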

\begin{proof} See  \cite[p.~23]{MR}.
\end{proof}
\begin{definition}
  We refer to the map 
$\gamma:\scA \to \scA\{\Sigma\inv\}$ of Lemma~\ref{lem:2.21} as  a {\sf localization} of
the $\cin$-ring $\scA$ at the set $\Sigma$.   
\end{definition}
\begin{remark} A localization of a $\cin$-ring $\scA$ at a set
  $\Sigma$ is unique up to a unique isomorphism, so we can speak about
  {\em the} localization of $\scA$ at $\Sigma$.
\end{remark}

\begin{notation}\label{not:2.22}
Let $\scA$ be a $\cin$-ring. By Lemma~\ref{lem:2.21}, for an
$\R$-point $x:\scA\to \R$ of a $\cin$-ring $\scA$ there exists a
localization of $\scA$ at the set
\[
\{x\not =0\}:= \{a\in \scA \mid x(a) \not = 0\}
\]
We set
\[
\scA_x:= \scA\{ \{x\not =0\}\inv\}.
\] 
and denote the corresponding localization map by
\begin{equation} \label{eq:pix}
  \pi_x:\scA\to \scA_x
\end{equation}
\end{notation}
\noindent
Joyce proves \cite[Proposition~2.14]{Joy} that $\pi_x:\scA \to \scA_x$ is surjective
with $I_x:= \ker \pi_x$ given by
\begin{equation} \label{eq:5.13}
I_x:= \{ a \in \scA \mid \textrm{ there is } d\in \scA \textrm{ so
  that } x(d)\not =0 \textrm{ and } ad= 0\}.
\end{equation}
We think of $\scA_x$ as the ring of germs of elements of $\scA$ at the
point $x$.

\begin{remark} \label{rmrk:3.12} In case of $\scA = \cin(\R^n)$ and
  $x:\cin(\R^n) \to \R$ (which is the evaluation at some point
  $p\in \R^n$ by Remark~\ref{rmrk:Milnors}) 
  the localization $(\cin(\R^n))_x$ is isomorphic to the usual ring of
  germs of functions at $p$.  This is because both rings are
  localizations of $\cin(\R^n)$ at the same set; see \cite[Example
  2.15]{Joy}.
\end{remark}
\begin{remark}
  The localizations $\scA_x$ are local rings.  This is easy to see.
  Note first that for any $a\in \{x\not =0\}$, $x(a)$ is invertible in
  $\R$, hence $x:\scA\to \R$ gives rise to $\overline{x}:\scA_x\to \R$
  with $ x = \overline{x}\circ \pi_x$.  Moreover, for any $c\in \scA$,
  $\pi_x(c) \not \in \ker{\overline{x}}$ if and only if
  $x(c) \not = 0$ if and only if $\pi_x(c)$ is invertible in
  $\scA_x$. Hence $\scA_x \smallsetminus \ker{\overline{x}}$ consists
  of units of $\scA_x$ and therefore $\ker{\overline{x}}$ is a unique
  maximal ideal in $\scA_x$.
\end{remark}

\begin{construction} \label{construction:1}  Given a $\cin$-ring
$\scA$ we construct the corresponding affine $\cin$-scheme
$\Spec(\scA)$.  In Construction~\ref{constr:Z} we defined  a
topology on the set $X_\scA$ of $\R$-points of $\scA$.  We now
construct the structure sheaf $\scO_\scA$ on $X_\scA$.

We start by defining  a candidate etale space $S_\scA$ of the
sheaf $\scO_\scA$:
\[
S_\scA:= \bigsqcup _{x\in X_\scA} \scA_x \equiv \bigsqcup _{x\in X_\scA} \scA/I_x.
\]
The set $S_\scA$ comes with the evident surjective map $\varpi:
S_\scA\to X_\scA$ defined by $\varpi( s) = x$ for all $s\in
\scA_x\hookrightarrow S_\scA$. For any $a\in \scA$ we get a section $\frs_a: X_\scA
\to S_\scA$ of $\varpi$:
\begin{equation} \label{eq:2.6}
\frs_a(x) = \pi_x (a) \equiv a_x,
\end{equation}
where, as before, $\pi_x:\scA \to \scA_x$ is the localization map
\eqref{eq:pix}.  The collection of sets
\[
\{ \frs_a (U)\mid a\in \scA, U\in \Open(X_\scA) \}
\]  
forms a basis for a topology on $S_\scA$.  In this topology the
projection $\varpi: S_\scA\to X_\scA$ is a local homeomorphism and all
sections $\frs_a:X_\scA \to S_\scA$ are continuous.

{\em We
define the structure sheaf $\scO_\scA$ of $\Spec(\scA)$ to be the sheaf
of continuous sections of  $\varpi: S_\scA \to X_\scA$.}     Equivalently
\begin{equation} \label{eq:2.4}
\begin{split}  
\scO_\scA (U) =   &\{ s:U\to \bigsqcup _{x\in U} \scA_x\mid \textrm{ there
  is an open cover } \{U_\alpha\}_{\alpha \in A} \textrm{ of } U\\ 
  &
\textrm{ and } \{a_\alpha\}_{\alpha\in A} \subset \scA \textrm{ so that }
s|_{U_\alpha} = \frs_{a_\alpha}|_{U_\alpha} \textrm{ for all } \alpha \in A\}.
\end{split}
\end{equation}
for every open subset $U$ of $X_\scA$.  The $\cin$-ring structure on
the sets $\scO_\scA(U)$ is defined pointwise.

It follows from \eqref{eq:2.4} that the sheaf $\scO_\scA$ is a
sheafification of a presheaf $\cP_\scA$ defined by
\begin{equation}\label{eq:sec}
  \cP_\scA(U) := \{\frs_a|_U \mid a\in \scA\}.
\end{equation}

It turns out that the stalk of the sheaf $\scO_\scA$ at a point
$x\in X_\scA$ is (isomorphic to) $\scA_x$ (\!\cite[Lemma~4.18]{Joy}).  Consequently the pair
$(X_\scA, \scO_\scA)$ is a {\em local} $\cin$-ringed space. 
\end{construction}

\begin{remark} \label{rmrk:2.26}
Note that the canonical map $\eta: \cP_\scA\to \scO_\scA$ from a presheaf to its
sheafification is simply the inclusion: $\eta(\frs_a) = \frs_a$.

Note also that for any open subset $U$ of $X_\scA$ the map
\[
\scA \to \cP_\scA(U), \qquad a\mapsto \frs_a|_U
\]
is a surjective map of $\cin$-rings with the kernel $\bigcap_{x\in
  U}I_x$ (the ideals $I_x$ are given  by \eqref{eq:5.13}).  Thus
\[
\cP_\scA (U) \simeq \scA/\bigcap_{x\in U}I_x.
\]

Finally note that for any $\cin$-ring $\scA$ we get a map of
$\cin$-rings
\[
\scA \to \cP(X_\scA), \qquad a \mapsto \frs_a.
\]
which we may think of as a map
\[
\varepsilon_\scA: \scA \to \Gamma(\scO_\scA), \qquad \varepsilon_\scA(a) = \frs_a.
\]  
In general the map $\varepsilon_\scA$ is neither injective nor
surjective.  For example, $\ker \varepsilon_\scA = \bigcap _{x\in
  X_\scA}I_x$, which may be nonzero.
\end{remark}  
\begin{remark} \label{rmrk:2.27}
In the case where the $\cin$-ring $\scA$ in
Construction~\ref{construction:1} is the ring $\cin(M)$ of smooth
functions on a manifold $M$, the space $X_\scA$ of points of $\scA$  is canonically
homeomorphic to the topological space $M$ underlying the manifold $M$.
 More generally, as a local
$\cin$-ringed space $\Spec(\cin(M))$ is isomorphic to $(M, \cin_M)$
where $\cin_M$ denotes the sheaf of smooth functions on the manifold
$M$.   Moreover there is a unique  isomorphism
$\uu{\varphi}_M: (M, \cin_M) \to \Spec(\cin(M))$ that makes the diagram
\[
\xy
(-10,10)*+{\cin(M)}="1";
(14,10)*+{\Gamma(\cin_M)}="2";
(14, -10)*+{\Gamma(\scO_{\cin(M)})}="3";
{\ar@{->} ^{\id} "1";"2"};
{\ar@{->}_{\varepsilon_{\cin(M)}} "1";"3"};
{\ar@{<-}_{\Gamma(\uu{\varphi}_M)} "3";"2"};
\endxy
\]
commute.   From now on we suppress the isomorphism
$\uu{\varphi}_M: (M, \cin_M) \to \Spec(\cin(M))$ for all manifolds
$M$.  That is, in effect, we set
\[
\Spec(\cin(M)) = (M, \cin_M)
\]  
for all manifolds $M$.
\end{remark}

\begin{lemma} \label{lem:3.18}
Let $\scA$ be a Poisson $\cin$-ring and $x:\scA\to \R$ an $\R$-point.
Then the ideal $I_x$ defined by \eqref{eq:5.13} is a Poisson ideal.
\end{lemma}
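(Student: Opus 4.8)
The plan is to verify the defining condition for a Poisson ideal directly, namely that $\{f,a\}\in I_x$ for every $f\in I_x$ and every $a\in\scA$, by exploiting the explicit description of $I_x$ in \eqref{eq:5.13} together with the biderivation property of the bracket. The only nontrivial idea is the one step where I multiply a relation by the witness element a second time; everything else is mechanical.

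First I would record the elementary fact that any $\cin$-ring derivation $v$ satisfies the ordinary Leibniz rule $v(bc)=b\,v(c)+c\,v(b)$: this is the defining identity of Definition~\ref{def:Poisson_ring} applied to the multiplication function $m(s,t)=st\in\cin(\R^2)$, whose partials are $\partial_1 m=t$ and $\partial_2 m=s$. In particular, for every $a\in\scA$ the map $ad(a)=\{a,\cdot\}$ is an $\R$-algebra derivation of the commutative $\R$-algebra underlying $\scA$. I would also note that the $\R$-point $x$, being a map of $\cin$-rings, is in particular a homomorphism of the underlying $\R$-algebras, so it is multiplicative.

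Now fix $f\in I_x$. By \eqref{eq:5.13} there is $d\in\scA$ with $x(d)\neq 0$ and $fd=0$. Let $a\in\scA$ be arbitrary. Applying the Leibniz rule for $\{a,\cdot\}$ to the product $fd=0$ gives
\[
0=\{a,fd\}=\{a,f\}\,d+f\,\{a,d\},
\]
so that $\{a,f\}\,d=-f\,\{a,d\}$. This is the point where the extra multiplication is needed: multiplying both sides by $d$ and using commutativity together with $fd=0$ yields
\[
\{a,f\}\,d^2=-(fd)\,\{a,d\}=0.
\]
Since $x$ is multiplicative, $x(d^2)=x(d)^2\neq 0$, so $d^2$ is a witness exhibiting $\{a,f\}\in I_x$ via \eqref{eq:5.13}. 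Because $I_x$ is an ideal it is closed under negation, and antisymmetry of the Poisson bracket gives $\{f,a\}=-\{a,f\}\in I_x$. As $a$ was arbitrary, $I_x$ is a Poisson ideal.

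I do not expect a genuine obstacle here; the statement is essentially the assertion that the kernel of a localization of a Poisson $\cin$-ring is Poisson, and the content is the observation that an element annihilated by some $d\notin\ker x$ has its bracket with anything annihilated by $d^2\notin\ker x$. The one thing to be careful about is that $\{a,\cdot\}$ is only assumed to be a derivation in its \emph{second} argument in Definition~\ref{def:Poisson_ring}; I therefore run the Leibniz computation through $\{a,fd\}$ and recover $\{f,a\}$ at the very end using antisymmetry, rather than differentiating in the first slot directly.
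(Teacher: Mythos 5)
Your proof is correct and follows essentially the same route as the paper's: apply the Leibniz rule for $\{a,\cdot\}$ to the relation $fd=0$ and then exhibit an element not killed by $x$ that annihilates the bracket. The only (cosmetic) difference is at the end, where you multiply by $d$ once more to get the explicit witness $d^2$, whereas the paper notes $\{b,a\}d\in I_x$ (using that $I_x$ is an ideal), extracts a second witness $d'$ from \eqref{eq:5.13}, and uses $dd'$; your version is marginally more self-contained.
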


\begin{proof}
We need to show that for any $a\in I_x$ and any $b\in \scA$ the
bracket $\{a,b\}$ is again in $I_x$.

Since $a\in I_x$ there is
an element $d\in
\scA$ so that $x(d)\not = 0$ and $ad = 0$ (cf.\ Remark~\ref{rmrk:3.12}.  And then
\[
0=\{b, ad\} = \{b,a\}d + a \{b,d\}.
\]
Since $a\in I_x$ and $I_x$ is an ideal, $a\{b,d\}\in
I_x$. Consequently $\{b,a\} d\in I_x$.  Therefore there is $d'\in
\scA$ with $x(d')\not = 0$ and $\{b,a\}dd' = 0$.  Since $0 \not =x(d)
x(d') = x(dd')$, it follows that $\{b,a\}\in I_x$ and we are done.
\end{proof}

\begin{theorem} \label{thm:3.19}
Let $\scA$ be a Poisson $\cin$-ring
(Definition~\ref{def:Poisson_ring}).  Then the structure sheaf
$\scO_\scA$ of the affine $\cin$-scheme  $\Spec(\scA) =
(X_\scA, \scO_\scA)$ is a sheaf of Poisson $\cin$-rings.  That is, $\Spec(\scA)$ is a Poisson $\cin$-scheme.
\end{theorem}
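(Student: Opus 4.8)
The plan is to equip each stalk with its natural Poisson structure, define the bracket on sections pointwise, and then verify that the only nonformal point — that the pointwise bracket of two sections is again a section — follows from the fact that the localization maps preserve brackets.

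First I would record the stalkwise structure. By Lemma~\ref{lem:3.18} the kernel $I_x$ of the localization map $\pi_x:\scA\to\scA_x$ is a Poisson ideal for every $\R$-point $x\in X_\scA$. Hence Lemma~\ref{lem:3.4} applies to each $I_x$: the stalk $\scA_x = \scA/I_x$ is a Poisson $\cin$-ring with a bracket $\{\cdot,\cdot\}_x$, and, crucially for what follows, the projection $\pi_x:\scA\to\scA_x$ is a map of Poisson $\cin$-rings.

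Next, for an open set $U\subseteq X_\scA$ and sections $s,t\in\scO_\scA(U)$ I would define $\{s,t\}$ pointwise by $\{s,t\}(x):=\{s(x),t(x)\}_x$, in exact parallel with the pointwise definition of the $\cin$-ring operations on $\scO_\scA(U)$ in Construction~\ref{construction:1}. This assignment is manifestly well defined, since it depends only on the values $s(x),t(x)\in\scA_x$. The hard part is to show that it lies in $\scO_\scA(U)$, i.e., that it is locally a basic section in the sense of \eqref{eq:2.4}; this is precisely where the Poisson-map property of the $\pi_x$ is used. By \eqref{eq:2.4} we may pass to a common open refinement $\{W_\gamma\}$ of $U$ on which $s|_{W_\gamma}=\frs_{a_\gamma}|_{W_\gamma}$ and $t|_{W_\gamma}=\frs_{b_\gamma}|_{W_\gamma}$ for suitable $a_\gamma,b_\gamma\in\scA$. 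For $x\in W_\gamma$ we have $s(x)=\pi_x(a_\gamma)$ and $t(x)=\pi_x(b_\gamma)$, so since $\pi_x$ preserves brackets
\[
\{s,t\}(x) = \{\pi_x(a_\gamma),\pi_x(b_\gamma)\}_x = \pi_x(\{a_\gamma,b_\gamma\}) = \frs_{\{a_\gamma,b_\gamma\}}(x).
\]
Thus $\{s,t\}|_{W_\gamma}=\frs_{\{a_\gamma,b_\gamma\}}|_{W_\gamma}$ agrees locally with a basic section, and therefore $\{s,t\}\in\scO_\scA(U)$.

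Finally I would check the algebraic axioms, which are now routine because everything is evaluated stalkwise. Since the $\cin$-ring operations and the bracket on $\scO_\scA(U)$ are both computed pointwise and each $\scA_x$ is a Poisson $\cin$-ring, $\R$-bilinearity, antisymmetry, the Jacobi identity, and the biderivation identity of Definition~\ref{def:Poisson_ring} for $\{s,\cdot\}$ all hold for sections because they hold at every point $x$. The restriction maps of $\scO_\scA$ are ordinary restrictions of sections, so they commute with the pointwise bracket and hence are maps of Poisson $\cin$-rings. Therefore $\scO_\scA$ is a sheaf of Poisson $\cin$-rings and $\Spec(\scA)$ is a Poisson $\cin$-scheme.
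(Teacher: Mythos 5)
Your proof is correct, and it takes a route that differs in a meaningful way from the paper's. The paper never touches the stalks directly: it observes that $\cP_\scA(U)\simeq \scA/\bigcap_{x\in U}I_x$, notes that an intersection of Poisson ideals is a Poisson ideal (with Lemma~\ref{lem:3.18} supplying that each $I_x$ is Poisson), applies Lemma~\ref{lem:3.4} to get the bracket $\{\frs_a|_U,\frs_b|_U\}=\frs_{\{a,b\}}|_U$ on the presheaf $\cP_\scA$, checks that restrictions preserve it, and then declares that it suffices to have a presheaf of Poisson $\cin$-rings because $\scO_\scA$ is its sheafification. You instead apply Lemmas~\ref{lem:3.18} and~\ref{lem:3.4} to each $I_x$ separately, making each stalk $\scA_x=\scA/I_x$ a Poisson $\cin$-ring with $\pi_x$ bracket-preserving, define the bracket on sections pointwise, and prove closure in $\scO_\scA(U)$ by passing to a common refinement where both sections are basic, so that $\{s,t\}$ agrees locally with $\frs_{\{a_\gamma,b_\gamma\}}$ by \eqref{eq:2.4}. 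The trade-off: the paper's argument is shorter, but it leaves implicit the step of why the Poisson structure on the presheaf actually descends to a well-defined bracket on the sheafification; your pointwise construction is precisely an explicit verification of that step, exploiting the concrete description of $\scO_\scA$ as continuous sections of the etale space, and it has the side benefit of exhibiting the stalks as Poisson $\cin$-rings outright. Both arguments pivot on the same key input, Lemma~\ref{lem:3.18}, and both ultimately rest on the identity $\{\frs_a,\frs_b\}=\frs_{\{a,b\}}$, so the difference is one of where the bracket is constructed (presheaf level versus stalk level) rather than of substance.
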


\begin{proof}
Since the structure sheaf $\scO_\scA$ is the sheafification of the
presheaf $\cP_\scA$, it is enough to show that for every open set
$U\subset X_\scA$ the $\cin$-ring $\cP_\scA(U)$ is Poisson and that
for pairs of open sets $V\subseteq U$
the restriction maps  $\cP_\scA(U) \to \cP_\scA(V)$ are Poisson.
Recall from Remark~\ref{rmrk:2.26} that  for every open $U\subset
X_\scA$ the map
\[
\scA \to \cP(U), \qquad a\mapsto \frs_a|_U
\]
is surjective with the kernel $\bigcap_{x\in U}I_x$.   By
Lemma~\ref{lem:3.18} the  ideals
$I_x$ are  Poisson for all $\R$-points $x$.  Hence the intersection
$\bigcap_{x\in U}I_x$ is a Poisson ideal, and therefore the
$\cin$-ring $\cP(U)$ is Poisson.   Note that the bracket on $\cP(U)$
is given by
\[
\{ \frs_a|_U, \frs_b|_U\} = \frs_{\{a,b\}}|_U.
\]  
It follows that the restriction maps
\[
\cP_\scA(U)\to \cP_\scA(V),\qquad \frs_a|_U \mapsto (\frs_a|_U)|_V  = \frs_a|_V
\]
are Poisson.
\end{proof}

\begin{notation}
Poisson $\cin$-rings form a category, which we denote by $\pcring$. The
objects of $\pcring$ are Poisson
$\cin$-rings, the  morphisms are maps of $\cin$-rings that preserve
brackets.
\end{notation}
\begin{notation}
Poisson local $\cin$-ringed spaces also from a category:
the objects are Poisson local $\cin$-ringed spaces. We denote it by
$\PLCRS$.  A map in this category  from a Poisson local $\cin$-ringed
space $(X, \scO_X)$ to  another Poisson local $\cin$-ringed space $(Y, \scO_Y)$
is a map $\uu{f} = (f, f_\#): (X, \scO_X) \to (Y, \scO_Y)$ of local
$\cin$-ringed spaces so that $f_\#: \scO_Y\to f_* \scO_X$ is a map of
sheaves of 
Poisson $\cin$-rings.  That is, for every open set $U\subset X$ the
map
\[
f_{\#, U} : \scO_Y(U)\to \scO_X(f\inv (U))
\]
of $\cin$-rings preserves the Poisson bracket.
\end{notation}

We can restrict Dubuc's spectrum functor $\Spec:\op{\cring} \to
\LCRS$ to the subcategory of $\pcring$ of Poisson $\cin$-rings.  We
expect the image  of $\Spec|_{\pcring}$ to land in the category $\PLCRS$ of Poisson local $\cin$-ringed spaces.
To prove this we need to recall Joyce's construction of
$\Spec$ on morphisms.  As a first step we recall  a lemma,
which is well-known to experts.

\begin{lemma} \label{lem:2.25}
Let $\varphi: \scA \to \scB$ be a map of $\cin$-rings and $x:\scB\to
\R$ a point.  Then $\varphi$ induces a unique map $\varphi_x:
\scA_{x\circ \varphi} \to \scB_x$ of $\cin$-rings on their respective  localizations making the diagram
\begin{equation} \label{eq:2.4n}
  \xy
(-10,6)*+{\scA }="1";
(14,6)*+{\scB}="2";
(-10, -6)*+{\scA_{x\circ \varphi}}="3";
 (14, -6)*+{\scB_x}="4";
{\ar@{->}_{\pi_{x\circ \varphi}} "1";"3"};
{\ar@{->}^{\pi_x} "2";"4"};
{\ar@{->}^{\varphi} "1";"2"};
{\ar@{->}_{\varphi_x} "3";"4"};
\endxy
\end{equation}
commute.
\end{lemma}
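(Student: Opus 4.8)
The plan is to obtain $\varphi_x$ by a direct appeal to the universal property of localization established in Lemma~\ref{lem:2.21}. The candidate map out of $\scA_{x\circ\varphi}$ is forced by the commutativity requirement $\varphi_x\circ\pi_{x\circ\varphi} = \pi_x\circ\varphi$, so the natural move is to factor the $\cin$-ring map $\pi_x\circ\varphi:\scA\to\scB_x$ through the localization map $\pi_{x\circ\varphi}:\scA\to\scA_{x\circ\varphi}$.

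First I would verify the one hypothesis needed to invoke the universal property, namely that $(\pi_x\circ\varphi)(a)$ is invertible in $\scB_x$ for every $a$ in the localizing set $\{x\circ\varphi\neq 0\} = \{a\in\scA\mid x(\varphi(a))\neq 0\}$. This is the only point requiring an argument, and it is immediate: if $x(\varphi(a))\neq 0$, then $\varphi(a)$ lies in the set $\{x\neq 0\}$ at which $\scB$ is localized, so by part~(1) of Lemma~\ref{lem:2.21} the element $\pi_x(\varphi(a)) = (\pi_x\circ\varphi)(a)$ is invertible in $\scB_x$. In other words, the key observation is simply that $\varphi$ carries $\{x\circ\varphi\neq 0\}$ into $\{x\neq 0\}$.

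With this in hand, part~(2) of Lemma~\ref{lem:2.21} applied to the map $\pi_x\circ\varphi:\scA\to\scB_x$ (whose target inverts all elements of $\{x\circ\varphi\neq 0\}$) yields a unique $\cin$-ring map $\varphi_x:\scA_{x\circ\varphi}\to\scB_x$ with $\varphi_x\circ\pi_{x\circ\varphi} = \pi_x\circ\varphi$. This is precisely the commutativity of diagram~\eqref{eq:2.4n}, and the uniqueness clause of the universal property gives the uniqueness of $\varphi_x$. There is no genuine obstacle here beyond the inclusion of localizing sets; the main thing to be careful about is keeping straight at which point ($x$ versus $x\circ\varphi$) each localization is taken, since the statement is really just the functoriality of the germ construction $\scA\mapsto\scA_x$.
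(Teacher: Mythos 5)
Your proposal is correct and follows essentially the same route as the paper: the paper's proof also reduces everything to the containment $\varphi(\{x\circ\varphi\neq 0\})\subseteq\{x\neq 0\}$ and then invokes the universal property of Lemma~\ref{lem:2.21}. Your write-up merely makes explicit the application of parts (1) and (2) of that lemma, which the paper leaves implicit.
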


\begin{proof}
Since $\pi_y: \scA \to \scA_y$ is a localization of the $\cin$-ring
$\scA$ at the set $\{y\not =0\} := \{a\in \scA \mid y(a)\not =0\}$ and
$\pi_x:\scB \to \scB_x$ is the localization of $\scB$ at $\{x\not
=0\}$, it is enough to show that
\begin{equation} \label{eq:2.3}
  \varphi(\{x\circ \varphi\not =0\}) \subseteq
  \{x\not = 0\}.
\end{equation}
Since
\[
(x \circ \varphi)(a) \not = 0 \quad \Leftrightarrow \quad x ( \varphi(a)) \not =0,
\]
\eqref{eq:2.3} holds and we are done.
\end{proof}

\begin{construction}[Construction of $\Spec$ on morphisms] \label{construction:2}
Let $\varphi:\scA \to \scB$ be a map of $\cin$-rings.   We construct
$\Spec(\varphi): \Spec(\scB)\to \Spec(\scA)$, a map of local
$\cin$-ringed spaces.  First define a map
of sets $f(\varphi):X_\scB \to X_\scA$ (where as before
$X_\scA= \Hom (\scA,\R)$ and $X_\scB= \Hom(\scB, \R)$) by
\[
f(\varphi)(x):= x\circ \varphi.
\]
Recall that $\{ V_a = \{ y\in X_\scA \mid y(a) \not =0\}\}_{a\in
  \scA}$ is a basis for the topology on $X_\scA$ and similarly $\{ U_b
= \{ x\in X_\scB \mid x(b) \not =0\}\}_{b\in   \scB}$ is a basis for
the topology on $X_\scB$.  Since
\[
(f(\varphi))\inv (V_a) = \{ x \in X_\scB\mid f(\varphi) x \in V_a\} =
\{ x \in X_\scB \mid x (\varphi(a)) \not =0\} = U _{\varphi(a)},
\]
the map $f(\varphi)$ is continuous with respect to the Zariski
topologies on $X_\scB$ and $X_\scA$.  It remains to construct a map of
sheaves
\[
f(\varphi)_\#: \scO_\scA \to f(\varphi)_* \scO_\scB.
\]
Fix an open set $U\subseteq X_\scA$.  We construct a map 
\[
f(\varphi)_{\#, U}: \cP_\scA (U)\to \scO_\scB (f(\varphi)\inv U).
\]
of $\cin$-rings as follows.  Recall that
\[
  \cP_\scA (U) = \{ \frs_a: U\to \bigsqcup _{x\in U} \scA_x\mid a\in
  \scA\}
\]
where, as in \eqref{eq:2.6}, $\frs_a(y) = a_y$ for all $y\in U$.  Here
and below to reduce the clutter we write $\frs_a$ when we mean $\frs_a|_U$.
Given $\frs_a \in \cP_\scA (U)$ consider
\[
\ti{\frs}_a : f(\varphi)\inv (U) \to \bigsqcup _{x\in f(\varphi)\inv
  (U)} \scB_x, \qquad \ti{\frs}_a (x)  := \varphi_x (\frs_a (f(\varphi)x)),
\]
where $\varphi_x: \scA_{f(\varphi)x} \to \scB_x$ is the map from
Lemma~\ref{lem:2.25}.   Note that 
\[
\varphi_x (\frs_a (f(\varphi)x)) = \varphi_x (\pi_{f(\varphi) x} a)
= \pi_x (\varphi(a)),
\]  
where the last equality is commutativity of \eqref{eq:2.4n}.   Hence
$\ti{\frs}_a  = \frs_{\varphi(a)} \in \cP_\scB(f(\varphi)\inv (U))
\subset \scO_\scB(f(\varphi)\inv (U))$.  We therefore define
\[
  f(\varphi)_{\#, U}: \cP_\scA (U)\to \scO_\scB (f(\varphi)\inv U)
\]
by
\[
f(\varphi)_{\#, U} (\frs_a): = \frs_{\varphi(a)}
\]  
for all $a\in \scA$.  It is easy to see that the family of maps
$\{f(\varphi)_{\#, U}\}_{U\subset X_\scA}$ is a map of presheaves
$f(\varphi)_\#: \cP_\scA \to f(\varphi)_* \scO_\scB$.  By the
universal property of sheafification we get a map of sheaves
$f(\varphi)_\#: \scO_\scA \to f(\varphi)_* \scO_\scB$.

We define
\[
  \Spec(\varphi) \equiv
  \uu{f(\varphi)}: = (f(\varphi), f(\varphi)_\#).
\]  
\end{construction}

\begin{lemma} \label{lem:3.23} For any map $\varphi: \scA \to \scB$ of Poisson
  $\cin$-rings the induced map on spectra $\Spec(\varphi) \equiv (f(\varphi), f(\varphi)_\#):\Spec(\scB))
  \to \Spec(\scA)$ is Poisson.
\end{lemma}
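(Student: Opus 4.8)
The plan is to reduce the assertion to a bracket-preserving property of the presheaf map of Construction~\ref{construction:2}, in the same spirit as the proof of Theorem~\ref{thm:3.19}. By definition of a morphism in $\PLCRS$, I must show that for every open $U \subseteq X_\scA$ the component $f(\varphi)_{\#,U}: \scO_\scA(U) \to \scO_\scB(f(\varphi)\inv(U))$ preserves the Poisson bracket. Since $\scO_\scA$ and $\scO_\scB$ are the sheafifications of $\cP_\scA$ and $\cP_\scB$, and since the brackets on the structure sheaves are computed sectionwise from the presheaf brackets (for which $\{\frs_a, \frs_b\} = \frs_{\{a,b\}}$ by the proof of Theorem~\ref{thm:3.19}), it suffices to check that the presheaf map $f(\varphi)_\#: \cP_\scA \to f(\varphi)_* \cP_\scB$ is Poisson and then to pass to sheafifications.

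First I would record the presheaf computation. On an open set $U$ the map acts by $f(\varphi)_{\#,U}(\frs_a) = \frs_{\varphi(a)}$, as in Construction~\ref{construction:2}. Using the presheaf bracket formula and the hypothesis that $\varphi$ is a map of Poisson $\cin$-rings, so that $\varphi(\{a,b\}) = \{\varphi(a), \varphi(b)\}$, I obtain
\begin{align*}
f(\varphi)_{\#,U}(\{\frs_a, \frs_b\}) &= f(\varphi)_{\#,U}(\frs_{\{a,b\}}) = \frs_{\varphi(\{a,b\})} \\
&= \frs_{\{\varphi(a), \varphi(b)\}} = \{\frs_{\varphi(a)}, \frs_{\varphi(b)}\} = \{f(\varphi)_{\#,U}(\frs_a), f(\varphi)_{\#,U}(\frs_b)\},
\end{align*}
which shows that the presheaf map preserves brackets.

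The step requiring the most care, and the one I would regard as the crux, is the passage from this presheaf identity to the sheaf-level statement. The brackets on $\scO_\scA(U)$ and on $\scO_\scB(V)$ are defined pointwise through the stalks $\scA_x = \scA/I_x$ and $\scB_x = \scB/I_x$, which are Poisson $\cin$-rings by Lemmas~\ref{lem:3.18} and~\ref{lem:3.4}; correspondingly the stalk of $f(\varphi)_\#$ at a point $x \in X_\scB$ is the localized map $\varphi_x: \scA_{f(\varphi)x} \to \scB_x$ of Lemma~\ref{lem:2.25}. I would therefore verify directly that each $\varphi_x$ is Poisson. Since the localization $\pi_{f(\varphi)x}: \scA \to \scA_{f(\varphi)x}$ is surjective and, by Lemma~\ref{lem:3.4}, a map of Poisson $\cin$-rings, any two elements of $\scA_{f(\varphi)x}$ have the form $\pi_{f(\varphi)x}(a)$ and $\pi_{f(\varphi)x}(b)$, and the commuting square~\eqref{eq:2.4n} together with the facts that $\varphi$ and $\pi_x$ are Poisson yields
\begin{align*}
\varphi_x(\{\pi_{f(\varphi)x}(a), \pi_{f(\varphi)x}(b)\}) &= \varphi_x(\pi_{f(\varphi)x}(\{a,b\})) = \pi_x(\varphi(\{a,b\})) \\
&= \pi_x(\{\varphi(a), \varphi(b)\}) = \{\pi_x(\varphi(a)), \pi_x(\varphi(b))\} \\
&= \{\varphi_x(\pi_{f(\varphi)x}(a)), \varphi_x(\pi_{f(\varphi)x}(b))\}.
\end{align*}
As the sheaf bracket is computed stalkwise and every stalk map $\varphi_x$ preserves brackets, each $f(\varphi)_{\#,U}$ is Poisson, so $\Spec(\varphi)$ is a morphism in $\PLCRS$. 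I expect no genuine obstacle beyond bookkeeping the two distinct localizations $\scA_{f(\varphi)x}$ and $\scB_x$ and using the surjectivity of the localization maps to run the verification over arbitrary stalk elements.
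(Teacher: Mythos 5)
Your proposal is correct and takes essentially the same approach as the paper: the paper's entire proof consists of your first computation, namely that $f(\varphi)_{\#,U}(\frs_a)=\frs_{\varphi(a)}$ together with the presheaf bracket formula $\{\frs_a,\frs_b\}=\frs_{\{a,b\}}$ and the hypothesis $\varphi(\{a,b\})=\{\varphi(a),\varphi(b)\}$ shows the presheaf map is Poisson, and the paper then regards the passage to sheafifications as immediate. Your supplementary stalkwise verification that each localized map $\varphi_x:\scA_{f(\varphi)x}\to\scB_x$ preserves brackets (using surjectivity of $\pi_{f(\varphi)x}$, Lemma~\ref{lem:3.4}, and the square~\eqref{eq:2.4n}) is sound and merely makes explicit the step the paper leaves implicit.
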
  

\begin{proof} It is enough to show that for any open subset $U$ of $X_\scB$
the map
\[
  f(\varphi)_{\#, U}: \cP_\scA (U)\to \scO_\scB (f(\varphi)\inv U),
  \qquad \frs_a\mapsto  \frs_{\varphi(a)}
\]
is Poisson.    Recall that the bracket on $\cP_\scA (U)$ is given by
\[
\{\frs_a, \frs_b\} = \frs_{\{a,b\}}
\]
for all $a, b\in \scA$ (as before we drop the restrictions to $U$ to reduce
the clutter).   Since $\varphi (\{a, b\}) = \{\varphi(a),
\varphi(b)\}$ the result follows.
\end{proof}  

\section{Applications} \label{sec:ex}

\subsection{Marsden-Weinstein-Meyer reduction} \label{subsec:sr}\quad 
Let $(M, \omega, \mu:M\to \fg^\vee)$ be a symplectic manifold with a
Hamiltonian action of a Lie group $G$ and associated equivariant
moment map $\mu$.  Let $\scA$ be the $\cin$-ring of $G$-invariant
functions $\cin(M)^G$.  As we observed in Example~\ref{ex:ACG}, for any point
$\alpha\in \fg^\vee$ the ideal $I_\alpha \subset \scA$ of invariant
functions that vanish on the level set $\mu\inv (\alpha)$ is a Poisson
ideal.  Hence $\Spec(\scA/I_\alpha)$ is a Poisson $\cin$-scheme.

In the case where the $\cin$-ring $\cin(M)^G$ is sufficiently nice and
the action of $G$ on $M$ is proper the space $X_{\scA/I_\alpha}$ of
points of the scheme $\Spec(\scA/I_\alpha)$ is
homeomorphic to $\mu\inv (\alpha)/G_\alpha$ (where $G_\alpha$ denotes
the stabilizer of $\alpha \in \fg^\vee $ under the coadjoint actions).
We'll discuss it in the next section, Section~\ref{sec:compute}.
Therefore we may regard the scheme $\Spec(\scA/I_\alpha)$ as (a
possibly singular) symplectic quotient (a.k.a.\ reduced space)
$M/\!/_\alpha G$.

If the action of $G$ on $M$ is {\sf not} proper, the space
$X_{\scA/I_\alpha}$ may be quite different from the topological
quotient $\mu\inv (\alpha)/G_\alpha$.  Here is an example.
\begin{example} \label{ex:4.1}
Consider
the action of $\R$ on the torus $\T^2 = \R^2/\Z^2$ by an irrational
flow: 
\[
t \cdot [(x,y)] := [x+t, y +\sqrt{2} t]
\]
 for
all $t\in \R$ and all $[x,y]\in \R^2/\Z^2$.  Then the lifted action of
$\R$ on the cotangent bundle $T^*\T^2 \simeq \T^2 \times \R^2$ is
Hamiltonian with the moment  $\mu([x,y], p_x, p_y) = p_x + \sqrt{2}
p_y$. Consequently
\[
\mu\inv (\alpha) = \{([x,y], p_x, p_y) \in T^*\T^2 \mid p_x = \alpha -
\sqrt{2} p_y\}
\]  
and
\[
\mu\inv (\alpha) /\R \simeq (\T^2/\R)\times \R.
\]
On the other hand the Poisson algebra of $\R$-invariant functions on
$T^*\T^2$ is $\cin(T^*\T^2) ^\R\simeq \cin(\R^2)$ with the zero
Poisson bracket and
\[
  I_\alpha \simeq \{f\in \cin(\R^2) \mid f(\alpha - \sqrt{2}p_y, p_y)
  = 0\textrm{ for all }p_y\in \R\} .
\]
Hence
\[
\cin(T^*\T^2) ^\R\simeq \cin(\R^2)/I_\alpha \simeq \cin(\R)
\]
(with the zero Poisson bracket) and
\[
X_{\cin(T^*\T^2) ^\R}\simeq X_{ \cin(\R^2)/I_\alpha} \simeq X_{\cin(\R)} = \R.
\]
Note that there is an evident map
\[
  \mu\inv (\alpha) /\R \simeq (\T^2/\R)\times \R \to
  X_{\cin(T^*\T^2)^\R}%
  \simeq \R
\]
which  identifies all the points that cannot be separated by invariant
functions.
\end{example}
\subsection{Poisson reduction}\label{subsec:pr}\quad The construction of the Poisson
scheme in \ref{subsec:sr} does not require the assumption that the
manifold $M$ is symplectic --- we only need that $M$ is a
Poisson manifold, that the action of the group $G$ preserves the
Poisson bracket (and hence the $\cin$-subgring $\cin(M)^G$ of
invariant functions is Poisson) and that the Hamiltonian flows of
invariant functions preserve a closed subset $X\subset M$.  Then by
Lemma~\ref{lem:2.10} the vanishing ideal 
\[
	{\cI}(X) := \{f \in \cin(M)^G : f|_X = 0\}
\]
is a Poisson ideal in the $\cin$-ring $\cin(M)^G$ of invariant
functions.  Hence $\cin(M)^G/\cI(X)$ is a Poisson $\cin$-ring and
$\Spec(\cin(M)^G/\cI(X)$ is a Poisson $\cin$-scheme.

For example, the action of the group $G$ on the Poisson manifold $M$
may have an equivariant moment map $\mu:M\to \fg^\vee$ \cite{E}.   Then as in
the symplectic case the level sets $\mu\inv (\alpha)$, $\alpha \in
\fg^\vee$ are preserved by the Hamiltonian flows of invariant
functions and we may take the closed set $X$ to be a level set
$\mu\inv (\alpha)$, just as we did in the symplectic case.

Since $\varpi: \cin(M)^G \to \cin(M)^G/\cI(X)$ is surjective,
Lemma~\ref{lem:closed} implies that the
induced map $f(\varphi): X_{\cin(M)^G/\cI(X)} \to X_{\cin(M)^G}$ of
spaces is a closed embedding.  Furthermore, by Lemma~\ref{lem:5.8}, if
the action of $G$ on $M$ is proper then the space of points
$X_{\cin(M)^G}$ is homeomorphic to $M/G$.   Hence in the case of
proper actions the
space of points of the affine Poisson scheme $\Spec(\cin(M)^G/\cI(X))$
is homeomorphic to a closed subset of the orbit space $M/G$.

\subsection{quasi-Poisson reduction}\label{subsec:qpr}\quad
Let $(M, \mathsf{P}, \Phi: M\to G)$ be a quasi-Poisson manifold in the
sense of Alekseev, Kosmann-Scwarzbach and Meinrenken
\cite{AKSM}.  As explained in Section~6 of \cite{AKSM} the $\cin$-ring of invariant functions $\cin(M)^G$  is a Poisson
$\cin$-ring.   Therefore $\Spec( \cin(M)^G)$ is a Poisson
$\cin$-scheme.

It is shown in \cite{AKSM} in the course of the proof  of Theorem~6.1
that for any invariant function $f$ on $M$ the moment map $\Phi$ is
constant along the integral curves of the Hamiltonian vector field of
$f$.  Arguing as in the proof of Lemma~\ref{lem:2.10} we see that for
any conjugacy class $\scC \subset G$ the ideal
\[
I_\scC := \{f \in \cin(M)^G \mid f|_{\Phi\inv (\scC)} = 0\}
\]
is a Poisson ideal in the Poisson $\cin$-ring $\cin(M)^G$.
Consequenty the quotient $\cin(M)^G/I_\scC$ is a Poisson $\cin$-ring
and its spectrum $\Spec(\cin(M)^G/I_\scC)$ is a Poisson
$\cin$-scheme.  As before the quotient map
$\cin(M)^G\to \cin(M)^G/I_\scC$ induces a closed embedding
$X_{\cin(M)^G/I_\scC} \hookrightarrow X_{\cin(M)^G}$.    

\subsection{Schemes associated to various ideals in Poisson
  manifolds}\label{subsec:pi}\quad
Let $P$ be a Poisson manifold and $I\subset \cin(P)$ an ideal in the
$\cin$-ring $\cin(P)$.  By Lemma~\ref{lem:SW} the normalizer of $I$ in
the Poisson algebra $\cin(P)$ is a Poisson $\cin$-ring.  We now
consider two cases.\\

\noindent  {\bf Case a.}\quad The ideal $I$ is a {\em Poisson
  subalgebra } of $\cin(P)$, i.e, for all $f,g \in I$ the bracket
$\{f, g\}\in I$.  Then the ideal $I$ is contained in its normalizer
$N(I)$. Hence the quotient $N(I)/I$ is a Poisson $\cin$-ring and its
spectrum $\Spec(N(I)/I)$ is a Poisson $\cin$-scheme.  This is a version
of coisotropic reduction in Poisson geometry.

In more detail, the
inclusion $N(I) \hookrightarrow \cin(P)$ induces an injective map
$\varphi: N(I)/I \to \cin(P)/I$. Taking $\Spec$ we get a map of
$\cin$-schemes $\Spec(\varphi) : \Spec(\cin(P)/I) \to \Spec(N(I)/I)$.

Note that in general if $\psi: \scA\to \scB$ is an injective map of
$\cin$-rings, the induced map of points $f(\psi): X_\scB \to X_\scA$ need not
be surjective.  For example consider a smooth map $F:M\to N$ between
two manifolds with $F(M)$ dense in $N$.  Then $F^*:\cin(N) \to
\cin(M)$ is injective and $f(F^*) = F$  (we use the notation of
Construction~\ref{construction:2}), so $f(F^*)$ it's not surjective.

However, suppose $C\subset P$ is a
coisotropic submanifold and let $I$ denote the ideal of functions
that vanish on $C$.   Then $I$ is a Poisson subalgebra of $\cin(P)$
and  the Poisson $\cin$-ring $N(I)/I$  is
isomorphic to the $\cin$-ring of functions on $C$ that are constant
along the leaves of the null foliation $\scF$.  If the space of leaves
$C/\scF$ is a
manifold then $C/\scF)$ is isomorphic to $\Spec(N(I)/I)$.   In general we
view the Poisson $\cin$-scheme $\Spec(N(I)/I)$ as the coisotropic
reduction of $C$.  Note that the underlying topological space $X_{N(I)/I}$ may be
nicer than the leaf space $C/\scF$ since (as was mentioned above) the
space of points of a $\cin$-ring is Hausdorff and completely regular.
We have seen an instance of it when we looked at the reduction of the cotangent
bundle of the 2-torus $T^*\T$ by the irrational flow (Example~\ref{ex:4.1}).

\begin{example}[Sniatycki-Weinstein reduction] \label{ex:SW2}
We now revisit Example~\ref{ex:SW}.  Recall that one starts with a
symplectic manifold $(M, \omega)$ equipped with  a Hamiltonian action
of a connected Lie group $G$ and associated equivariant moment map
$\mu:M\to \fg^\vee$.  Then the ideal $\scI \subset \cin(M)$ generated
by the components of the moment map $\mu$ is closed under the Poisson
brackets, hence $N(\scI)/\scI)$ is Poisson $\cin$-ring equipped
with an injective map $\varphi: N(\scI)/\scI \to \cin(M)/\scI$.
Hence $\Spec(N(\scI)/\scI$ is a Poisson scheme which we may view as a
coisotropic reduction of $\cin(M)/\scI$.

Note that since the quotient map $q: \cin(M) \to \cin(M)/\scI$ is
surjective, the induced map
$f(q): X_{\cin(M)/\scI} \to X_{\cin(M)} \simeq M$ is a closed
embedding by Lemma~\ref{lem:closed}.  However, in general the ideal
$\scI$ may be strictly smaller than the vanishing ideal of
$\mu\inv(0)$ (cf.\ \cite{AGJ}) and the scheme $\Spec(\cin(M)/\scI)$ need not be reduced.
\end{example}

\noindent  {\bf Case b.}\quad The ideal $I$ is an arbitrary ideal of
the $\cin$-ring $\cin(P)$.  Then the intersection $N(I) \cap
I\subseteq N(I)$ is a Poisson ideal in $N(I)$.  Consequently
$\Spec(N(I)/\left(I\cap N(I))\right)$ is a Poisson scheme.  Note that the inclusion
$N(I) \hookrightarrow \cin(P)$ induces an injective map
\begin{equation}
\varphi: N(I)/(I\cap N(I)) \hookrightarrow \cin(P)/I.
\end{equation} 
If the map $\varphi$ above is an isomorphism then,  since $ N(I)/(I\cap
N(I))$ is Poisson  $\cin$-ring,  the quotient $\cin(P)/I$  is a also 
Poisson $\cin$-ring.   Consequently the close subscheme $\Spec(\cin(P)/I)$  of
the manifold $P$ is a Poisson scheme.   When $I$ is a vanishing ideal
of a submanifold $M\subset P$, then $\Spec(\cin(P)/I) = M$ and $M$ is
a Poisson-Dirac submanifold of $P$. 

 In general whenever $\varphi$ is
an isomorphism we can define $\Spec(\cin(P)/I)$  to be a Poisson-Dirac
subscheme $P$ (without any smoothness assumption).

If the map $\varphi$ fails to be surjective we still have a map
\[
\Spec(\varphi): \Spec (\cin(P)/I)\to \Spec N(I)/I\cap N(I)
\]
of local $\cin$-ringed spaces.   We can view the map as a kind of
coisotropic reduction of the closed subscheme $\Spec (\cin(P)/I)$ of
the Poisson manifold $P$.

\section{Computing $\Spec(\scA)$} \label{sec:compute}
Here is a brief summary of the content of this section
We start by recalling what it means for a $\cin$-ring $\scA$ to be
finitely generated.   For such rings Dubuc \cite{Dubuc} gave a fairly
concrete description of $\Spec(\scA)$ which we  recall.   Next we
consider point determined $\cin$-rings (see Definition~\ref{def:A.2}).     We show that if $\scA$ is
point determined then the structure sheaf $\scO_\scA$ of $\Spec (\scA)$
is a sheaf of {\em functions} on the space $X_\scA$ of its points.
For finitely generated {\em and} point determined $\cin$-rings the description of
$\Spec(\scA)$ simplifies considerably: $\Spec(\scA)$ is a closed
subset of some $\R^N$ with the induced sheaf of ``smooth'' functions.

We then study $\Spec(\scA)$ where $\scA$ is a ring of invariant
functions $\cin(M)^G$ for an action of a Lie group $G$ on a manifold
$M$.  Such rings are point determined and there is always a map
$\overline{\ev}:M/G\to X_{\cin(M)^G}$ from the orbit space to the
space of points of $\cin(M)^G$.   If the invariant functions
separate points the map $\overline{\ev}$ is injective.   If
$\cin(M)^G$ contains a proper function then the map  $\overline{\ev}$
is surjective.  If the action of $G$ on $M$ is proper then
$\overline{\ev}$ is a homeomorphism.  And then the structure sheaf of
$\Spec(\cin(M)^G)$  is the sheaf of functions on the orbit space $M/G$
which has a familiar description.

Finally we consider the case where the action of $G$ on $M$ is proper
and the $\cin$-ring $\cin(M)^G$ is finitely generated.  This happens,
for example, when $G$ is compact and the action has finitely many
orbit type by a deep theorem of Schwarz \cite{Sch}.  Note that
finiteness of the number of orbit types is not necessary --- see
Example~6.9 in \cite{KL-toric}.   We then have two descriptions of
$\Spec(\cin(M)^G$: as a sheaf on functions on $M/G$ and as sheaf of
functions on a closed subset $Z$ of some $\R^n$.   Not surprisingly
the two descriptions a related in a way that should look familiar.  We
now get to work.\\

\subsection{$\Spec(\scA)$ for a finitely generated $\cin$-ring
  $\scA$}\quad \label{subsec:spec_fg}
Recall that for any natural number $n$ the $\cin$-ring $\cin(\R^n)$ is
freely generated by the set $\{x_1,\ldots, x_n\}$ of the standard
coordinate functions (see \cite{MR}, for example).  This is because
for any smooth function $f\in \cin(\R^n)$
\[
  f = f\circ (x_1,\ldots, x_n)  = f_{\cin(\R^N)} (x_1,\ldots, x_n).
\]  
\begin{definition}
A $\cin$-ring $\scA$ is {\sf finitely generated} if there is a
surjective map $\varpi: \cin(\R^n) \to \scA$ of $\cin$-rings.  And
then $\{\varpi(x_1), \ldots \varpi(x_n)\} \subset \scA$ is a set of generators of $\scA$.
\end{definition}
In \cite{Dubuc} on p.~687 in the paragraph right below Proposition~12 
Dubuc describes the affine scheme $\Spec(\scA)$ for a finitely
generated $\cin$-ring $\scA$ as follows. Since $\scA$ is finitely
generated there is a surjective map of $\cin$-rings $\Pi: \cin(\R^n)
\to \scA$.  Let $J$ denote the kernel of $\Pi$ and let
\begin{equation} \label{eq:A.vanish}
Z = Z_J:= \{p\in \R^n \mid f(p) = 0 \textrm{ for all } f\in J\}
\end{equation}
denote the zero set of the ideal $J$.   For each open set $U\subset
\R^n$ we then have the quotient $\cin$-ring
\[
\cP(U) = \cin(U)/J_U
\]  
where $J_U \subset \cin(U)$ is the ideal generated by the set $J|_U =\{f|_U\mid f\in
J\}$.  Note that if $U\cap Z_J= \varnothing$, then $\cP(U) = 0$.  Let
$\cE$ be the sheafification of $\cP$.  The support of $\cE$ is $Z_J$,
hence we can view $\cE$ as a sheaf on $Z$.   Dubuc then tells us that  $\Spec(\scA) \simeq
(Z, \cE)$.

\subsection{$\Spec(\scA)$ for a point determined $\cin$-ring
  $\scA$}\quad  It will be useful for us to formulate the definition
of a point determined $\cin$-ring as follows (it is equivalent to
Definition~4.1(a) in \cite[p.\ 44]{MR}).

\begin{definition} \label{def:A.2}
A $\cin$-ring $\scA$ is {\sf point determined} if for every element
$a\in \scA$
\[
\big( x(a) = 0 \textrm{ for all }x\in \Hom(\scA, \R) \equiv X_\scA \big)\quad \Rightarrow a = 0.
\]
Equivalently the map
\[
\scA \to \prod_{x\in X_\scA} \R, \quad a\mapsto (x(a))_{x\in X_\scA}
\]
is injective.
\end{definition}  
Here is another interpretation of what it means for a $\cin$-ring
$\scA$ to be point determined which is more geometrically
intuitive. For every element $a\in \scA$ we have a function $a_*:
X_\scA\to \R$, $a_* (x):= x(a)$.   The functions $a_*$ are continuous
with respect to the Zariski topology  $\scT_\scA$ on 
  $X_\scA$ and, moreover, $\scT_\scA$ is the smallest topology making
  all the funtions $a_*$ continuous \cite{Joy}.   In general the map
\begin{equation} \label{eq:A.1}
\scA \to C^0(X_\scA), \quad a\mapsto a_*
\end{equation}
may not be injective.  It is injective if and only if $\scA$ is point
determined.\\

\noindent
We now argue that if a $\cin$-ring $\scA$ is point determined
then the structure sheaf $\scO_\scA$ of the affine $\cin$-scheme
$\Spec(\scA)$ is a sheaf of continuous functions on its space $X_\scA$
of points.

\begin{lemma} \label{lem:A.3}
Let $\scA$ be a point determined $\cin$-ring.  Define a presheaf $\cP$
on the space $X_\scA$ of points of $\scA$  by
\begin{equation} \label{eq:A.3}
\cP(U) = \{ a_*|_U \mid a\in \scA\}
\end{equation}
for all $U\in \Open(X_\scA)$ (where as before $a_*(x) := x(a)$ for all
$a\in \scA$ and all $x\in X_\scA$) . The structure sheaf $\scO_\scA$
of $\Spec(\scA)$ is isomorphic to the sheafification of the presheaf
$\cP$.
\end{lemma}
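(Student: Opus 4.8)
The plan is to exhibit a natural morphism of presheaves $\psi\colon \cP_\scA \to \cP$ and to show that it becomes an isomorphism after sheafification; since $\scO_\scA$ is by construction the sheafification of $\cP_\scA$ (Construction~\ref{construction:1}), this yields $\scO_\scA \cong \sh(\cP)$. For each open $U\subseteq X_\scA$ I would set $\psi_U(\frs_a|_U) := a_*|_U$. To see this is well defined, first note that for every point $y$ the ideal $I_y$ of \eqref{eq:5.13} is contained in the maximal ideal $\ker y$: if $ad=0$ with $y(d)\neq 0$, then $y(a)y(d)=y(ad)=0$ forces $y(a)=0$. Hence, if $\frs_a|_U=\frs_{a'}|_U$, which by Remark~\ref{rmrk:2.26} means $a-a'\in\bigcap_{y\in U}I_y\subseteq\bigcap_{y\in U}\ker y$, then $y(a)=y(a')$ for all $y\in U$, i.e. $a_*|_U=a'_*|_U$. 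Because $a\mapsto\frs_a$ and $a\mapsto a_*$ each intertwine the pointwise $\cin$-operations, $\psi$ is a morphism of presheaves of $\cin$-rings commuting with restriction.

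Next I would reduce the desired isomorphism to a statement on stalks: a morphism of sheaves is an isomorphism iff it is an isomorphism on every stalk, sheafification preserves stalks, and the map induced on the stalk at $x$ by $\psi$ agrees with that induced by $\sh(\psi)$. By \cite[Lemma~4.18]{Joy} the stalk of $\scO_\scA$ (equivalently of $\cP_\scA$) at $x$ is the local ring $\scA_x$, and under this identification $\psi$ induces $\psi_x\colon \scA_x \to (\sh\cP)_x$, $\pi_x(a)\mapsto [a_*]_x$. Surjectivity of $\psi_x$ is immediate, since each $\psi_U$ is onto $\cP(U)=\{a_*|_U\}$ by definition and a filtered colimit of surjections is surjective.

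The crux is the injectivity of $\psi_x$, and this is where the point-determined hypothesis enters. Suppose $\pi_x(a)$ maps to $0$, i.e. the germ $[a_*]_x$ vanishes; then, using that the $U_c$ form a basis (Construction~\ref{constr:Z}), there is a basic open neighbourhood $U_d$ of $x$, so $x(d)\neq 0$, with $y(a)=0$ for all $y\in U_d$. The key observation is that $ad$ then vanishes at \emph{every} point of $X_\scA$: if $y(d)\neq 0$ then $y\in U_d$, so $y(a)=0$ and hence $y(ad)=0$; while if $y(d)=0$ then $y(ad)=y(a)y(d)=0$ regardless. Since $\scA$ is point determined (Definition~\ref{def:A.2}), $y(ad)=0$ for all $y$ forces $ad=0$; as $x(d)\neq 0$, this exhibits $a\in I_x=\ker\pi_x$ via \eqref{eq:5.13}, so $\pi_x(a)=0$. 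Thus $\psi_x$ is injective, hence bijective, for every $x$, and therefore $\sh(\psi)\colon \scO_\scA\to\sh(\cP)$ is an isomorphism of sheaves of $\cin$-rings.

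I would flag stalkwise injectivity as the only genuine obstacle: everything else is formal (well-definedness from $I_y\subseteq\ker y$, surjectivity from the very definition of $\cP$, and the standard passage from presheaves to sheaves). The single real input is the multiplicative trick $a\mapsto ad$, which converts local vanishing of the \emph{function} $a_*$ near $x$ into global vanishing of $ad$; point-determinedness then upgrades this to the algebraic identity $ad=0$ needed to place $a$ inside $I_x$.
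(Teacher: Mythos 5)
Your proof is correct, and it differs from the paper's in two substantive ways. The paper proves the stronger, presheaf-level statement that every $\psi_U\colon \cP_\scA(U)\to \cP(U)$, $\frs_a|_U\mapsto a_*|_U$, is an isomorphism of $\cin$-rings (so $\cP_\scA\cong\cP$ as presheaves, and the sheafifications agree); you instead prove the isomorphism only stalkwise and then invoke the standard facts that sheafification preserves stalks and that stalkwise isomorphisms of sheaves are isomorphisms. More interestingly, your injectivity mechanism is different: the paper chooses, for $x$ in an open set $U$, an element $d$ with $d_*(x)\neq 0$ and $\supp(d_*)\subset U$ --- a bump-function-type existence statement imported from \cite[Lemma~2.15]{KL} --- and then argues $a_*d_*=(ad)_*=0$ globally; you avoid that external input entirely by taking a \emph{basic} open $U_d\subseteq U$ containing $x$ and observing that $d_*$ vanishes identically off $U_d$ by the very definition of $U_d$, so $(ad)_*=0$ everywhere with no support or closure considerations. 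Both arguments then finish identically: point-determinedness gives $ad=0$, and $x(d)\neq 0$ places $a$ in $I_x$ via \eqref{eq:5.13}. Your well-definedness step is also leaner (the pointwise observation $I_y\subseteq\ker y$, rather than the paper's continuity argument). What the paper's route buys is the explicit identification of sections, $\cP_\scA(U)\simeq\cP(U)$ for every $U$, which is sometimes useful downstream; what yours buys is self-containedness. In fact, your basic-open trick applied to each $x\in U$ directly (rather than at the stalk) would recover the paper's stronger presheaf-level conclusion without \cite[Lemma~2.15]{KL}, so you could merge the two and get the best of both.
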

  
\begin{proof}
Recall that the structure sheaf $\scO_\scA$ is the sheafification of the
presheaf $\cP_\scA$ given by
\[
\cP_\scA(U) = \{\frs_a|_U \mid a\in \scA\} 
\]  
(cf.\ \eqref{eq:sec}) and that
$\cP_\scA(U)\simeq \scA/\bigcap_{x\in U}I_x$.  We construct a map of
presheaves $\psi:\cP_\scA \to \cP$ and prove  that $\psi$ is an
isomorphism.

Given $U\in \Open(X_\scA)$ consider the map
\[
\mu_U: \scA \to \cP(U), \quad \mu_U(a) := a_*|_U.
\]
Suppose $a\in \bigcap _{x\in U}I_x$.  Then for any $x\in U$, we have
$a\in I_x$.  This means that there is $d\in \scA$ so that $x(d)\not
=0$ and $ad=0$.  Since $0\not = x(d) = d_* (x)$ and $d_*$ is
continuous there is a neighborhood $V$ of $x$ so that $d_*(y) \not =0$
for all $y\in V$.   Since $ad = 0$, $0 = (ad)_* = a_* d_*$.  Hence
$a_*|_V = 0$.  Since $x\in U$ is arbitrary, $a_*|_U = 0$.  Therefore
\[
\psi_U: \cP_\scA(U)\simeq  \scA/\bigcap_{x\in U} I_x \to \cP(U),
\quad \frs_a|_U\mapsto a_*|_U
\]  
is a well-defined map of $\cin$-rings.  It is not hard to check that
the collection of maps $\{\psi_U\}_{U\in \Open(X_\scA)}$ assemble into a map of presheaves $\psi: \cP_\scA \to \cP$.

It remains to show that the maps $\psi_U$ are isomorphism.  It's clear
from the construction that $\psi_U:\cP_\scA(U)\to \cP(U)$ is onto for
every open set $U$.  Suppose $\frs_a|_U\in \ker (\psi_U)$.  Then
$a_*|_U = 0$.  Since the topology on $X_\scA$ is the smallest topology
such that all functions $a_*$ ($a\in \scA)$) are continuous, for every
open set $V\subset X_\scA$ for every $x\in V$ there is $d\in \scA$ so
that $d_*(x)\not = 0$  and $\supp (d_*)\subset V$ (see, for example
\cite[Lemma~2.15]{KL}).  Then
\[
0=  a_* d_* = (ad)_*
\]
Since the map \eqref{eq:A.1} is injective
($\scA$ is point determned), it follows that $da =0$   Since $x(d) =
x_*(d) \not =0$ it follows that $a\in I_x$.   Since $x\in U$ is
arbitrary, $a\in \bigcap_{x\in U}I_x$.  Therefore $\frs_a|_U = 0$ and
$\psi_U$ is injective.
\end{proof}  
We now describe $\Spec(\scA)$ when the $\cin$-ring $\scA$ is finitely
generated {\em and} point determined.

\begin{lemma}
Let $J\subset \cin(\R^n)$ be an ideal and 
\[Z= Z_J = \{p\in \cin(\R^n)\mid f(p) = 0 \textrm{ for all } f\in J\} 
\]
is its vanishing set and 
\[
I(Z_J) = \{f\in \cin(\R^n) \mid f(p) = 0 \textrm{ for all } p\in Z_J\}
\]
the zero ideal of $Z_J$.

 If the quotient $\scA := \cin(\R^n)/J$ is a point determined $\cin$-ring then
 \[
 I(Z_J) = J.
\] 
\end{lemma}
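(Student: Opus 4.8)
The plan is to prove the two inclusions $J \subseteq I(Z_J)$ and $I(Z_J) \subseteq J$ separately. The first is immediate from the definitions: any $f \in J$ vanishes on its own zero set $Z_J$, so $f \in I(Z_J)$. All the content lies in the reverse inclusion, which is exactly where the point-determined hypothesis is used.

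The key preliminary step is to identify the space $X_\scA$ of $\R$-points of $\scA = \cin(\R^n)/J$ with the vanishing set $Z_J$. By the universal property of the quotient $\cin$-ring, a $\cin$-ring map $x \colon \scA \to \R$ is the same as a $\cin$-ring map $\tilde x = x \circ \Pi \colon \cin(\R^n) \to \R$ that vanishes on $J$, where $\Pi \colon \cin(\R^n) \to \scA$ denotes the quotient map. By Milnor's exercise (Remark~\ref{rmrk:Milnors}), applied to the manifold $\R^n$, every $\R$-point of $\cin(\R^n)$ is the evaluation $\ev_p$ at a unique $p \in \R^n$; and $\ev_p$ vanishes on $J$ precisely when $f(p) = 0$ for all $f \in J$, that is, when $p \in Z_J$. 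Hence $p \mapsto \ev_p$ descends to a bijection $Z_J \xrightarrow{\ \sim\ } X_\scA$.

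Granting this identification, the reverse inclusion is just the point-determined condition (Definition~\ref{def:A.2}) unwound. Let $f \in I(Z_J)$ and set $\bar f = \Pi(f) \in \scA$. For every $\R$-point $x \in X_\scA$, corresponding to some $p \in Z_J$, we have $x(\bar f) = \ev_p(f) = f(p) = 0$ since $f$ vanishes on $Z_J$. As $\scA$ is point determined, the vanishing $x(\bar f) = 0$ for all $x \in X_\scA$ forces $\bar f = 0$ in $\scA$, i.e.\ $f \in J$. This proves $I(Z_J) \subseteq J$ and, together with the trivial inclusion, the equality $I(Z_J) = J$.

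I expect no serious obstacle; the lemma is essentially a reformulation of the point-determined property for the specific ring $\cin(\R^n)/J$. The only step demanding care is the identification $X_\scA \cong Z_J$, which combines Milnor's exercise for $\R^n$ with the universal property of the quotient $\cin$-ring, and both ingredients are already available in the paper.
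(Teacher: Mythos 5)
Your proposal is correct and follows essentially the same route as the paper's own proof: both reduce to the inclusion $I(Z_J)\subseteq J$, identify the $\R$-points of $\scA=\cin(\R^n)/J$ with (evaluations at) points of $Z_J$ via Milnor's exercise and the quotient map $\Pi$, and then invoke the point-determined hypothesis to conclude $f+J=0$. The only cosmetic difference is that you assert a bijection $Z_J\cong X_\scA$ while the paper only establishes (and only needs) surjectivity of $\overline{\ev}\colon Z_J\to X_\scA$.
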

\begin{proof}
Since $J\subset I(Z_J)$ we only need to show that $I(Z_J) \subset J$.

Recall that for every point $p\in \R^n$ we have an $\R$-point $\ev_p: \cin(\R^n)\to \R$ 
and that the map $\ev: \R^n \to \Hom (\cin(\R^n), \R)$ is a bijection (cf.\ Remark~\ref{rmrk:Milnors}).  The maps $\ev_p$ factor through $\cin(\R^n)/J$ if and only if $p\in Z_J$.
We thus have an induced map 
\begin{equation} 
\overline{\ev}:Z_J \to \Hom (\cin(\R^n/J), \R), \quad \overline{\ev}_p (f+J) = f(p).
\end{equation}
If $x: \cin(\R^n)/J\to \R$ is an $\R$-point then $x \circ \Pi :\cin(\R^n) \to \R$ is also an $\R$-point (where $\Pi:\cin(\R^n) \to \cin(\R^n)/J$ is the quotient map).
Consequently there is a unique $p\in \R^n$ so that 
\[
x\circ \Pi =\ev_p.
\]
Since $(x \circ \Pi)\,(f) = 0$ for all $f\in J$, such point $p$ must lie in $Z_J$.  Therefore $\overline{\ev}:Z_J \to \Hom (\cin(\R^n/J), \R)$ is surjective, i.e.,
\[
\Hom (\scA, \R)  = \{\overline{ev}_p \mid p\in Z_J\}.
\]
Suppose now $f\in I(Z_J)$.  Then
\[
0 = f(p) = \ev_p(f) = \overline{\ev}_p(f+J)
\]
for all $p\in Z_J$.  Hence $x(f+J) = 0$ for all $x\in \Hom (\scA,
\R)$.  Since $\scA$ is point determined,  $f+J = 0$ in $\scA = \cin(\R^n)/J$.
Thus $f\in J$.
\end{proof}

It follows from \ref{subsec:spec_fg} that if $\scA = \cin(\R^n)/J$ is
point determined then $X_\scA = Z_J$ and $\scO_A $ is given by
\[
\scO_\scA(U)  = \cin(\widetilde{U})|_U
\]  
for $U\in \Open(Z_J)$.  Here $\widetilde{U} \subset \R^n$ is any open
set with $\widetilde{U} \cap Z_J = U$.

The fact that the space of points $X_{\cin(\R^n)/J}$ embeds into
$\R^n$ as a closed subspace is not a coincidence.

\begin{lemma} \label{lem:closed}
Let $\varpi:\scB \to \scA$ be a {\em surjective } map of $\cin$-rings.
Then the induced map on spaces $
f(\varpi) :X_\scA \to X_\scB$, $f(\varphi) x := x\circ \varpi$ is a closed embedding.
\end{lemma}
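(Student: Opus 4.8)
The plan is to show three things about the map $f(\varpi)\colon X_\scA \to X_\scB$, $x\mapsto x\circ\varpi$: that it is injective, that its image is Zariski-closed in $X_\scB$, and that it is a homeomorphism onto that image. Together these are precisely the assertion that $f(\varpi)$ is a closed embedding. Continuity of $f(\varpi)$ is already available from Construction~\ref{construction:2} (it holds for any map of $\cin$-rings), so I would not reprove it.

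For injectivity, suppose $x_1\circ\varpi = x_2\circ\varpi$ with $x_1, x_2\in X_\scA$. Since $\varpi$ is surjective, every $a\in\scA$ equals $\varpi(b)$ for some $b\in\scB$, so $x_1(a)=x_1(\varpi(b))=x_2(\varpi(b))=x_2(a)$, giving $x_1=x_2$.

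To identify the image, I would use that a surjective map of $\cin$-rings is a quotient map: writing $J:=\ker\varpi$, one has $\scA\cong\scB/J$, and by the universal property of the quotient $\cin$-ring an $\R$-point $y\colon\scB\to\R$ factors as $x\circ\varpi$ for some $x\in X_\scA$ if and only if $y|_J=0$. Hence
\[
f(\varpi)(X_\scA)=\{y\in X_\scB \mid y(b)=0 \text{ for all } b\in J\}=\bigcap_{b\in J}\bigl(X_\scB\smallsetminus U_b\bigr),
\]
where $U_b=\{y\in X_\scB\mid y(b)\neq 0\}$ is a basic Zariski-open set. Being an intersection of Zariski-closed sets, the image is closed.

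Finally, to upgrade the continuous injection to a homeomorphism onto its image, it suffices to check that $f(\varpi)$ is open onto the image. Given a basic open $V_a=\{x\in X_\scA\mid x(a)\neq 0\}$, choose $b\in\scB$ with $\varpi(b)=a$ (possible by surjectivity); then for $y=x\circ\varpi$ one has $x(a)=y(b)$, so $f(\varpi)(V_a)=U_b\cap f(\varpi)(X_\scA)$, which is relatively open. Since images of unions are unions of images, $f(\varpi)$ carries every open set to a relatively open set and is thus a homeomorphism onto its closed image. The one step that genuinely uses the hypothesis (rather than formal manipulation of the Zariski topology) is the identification of the image with the vanishing locus of $J$, which rests squarely on surjectivity of $\varpi$, i.e.\ on the isomorphism $\scA\cong\scB/J$; everything else is a routine unwinding of the definitions.
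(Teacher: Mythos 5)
Your proof is correct and follows essentially the same route as the paper's: both identify the image with the vanishing locus of $J=\ker\varpi$ (your quotient universal property is the paper's first isomorphism theorem), observe this locus is an intersection of basic closed sets, and upgrade the continuous injection to a homeomorphism by showing basic opens $V_a$ map onto $U_b\cap f(\varpi)(X_\scA)$ for any lift $b$ of $a$. Nothing further to add.
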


\begin{proof}
For an ideal $I\subset \scB$ define the set of its zeros to be
\[
Z_I:= \{y\in X_\scB \mid y (f) = 0 \textrm{ for all } f\in I\}.
\]
Since \[
Z_I = \bigcap _{f\in I} \{x\in \scB \mid x(f) =0\}, 
\]
the set $Z_I$ is closed in
the set of points $X_\scB$.
We know that for any map of $\cin$-rings $\varphi:\scB\to
\scA$ the induced map $f(\varphi):X_\scA \to X_\scB$, $f(\varphi) x = x \circ \varphi$, is
continuous.

Let $J:= \ker \varpi$. By the first isomorphism theorem
\[
f(\varpi) \big( X_\scA \big)= \{y\in X_\scB \mid y|_J = 0\},
\]  
which is $Z_J$.  Thus $f(\varpi)$ maps $X_\scA$ continuously onto $Z_J$.
Since $\varpi$ is onto, $f(\varpi)$ is injective. To prove that
$f(\varpi) :X_\scA\to Z_J$ is a homeomorphism, it's enough to prove that
$f(\varpi)$ is an open map.  A basic open set in $X_\scA$ is of the form
$\{a \not = 0\}: = \{ x\in X_\scA \mid x(a) \not = 0\}$.   Since $\varpi$
is onto, there is $b\in \scB$ such that $\varpi(b) = a$.     Then
\[
x (\varpi(b)) \not = 0 \quad \Leftrightarrow \quad (x \circ \varpi) (b) \not =0.
\]
Consequently
\[
f(\varpi)\, (\{ a \not = 0\}) = \{ y \in X_\scB \mid  y = f(\varpi) x \,
\textrm{ and } \,  x(a) \not = 0\} = Z_J \cap \{ b \not =0\}, 
\]
and we are done.
\end{proof}

\subsection{ The spectrum of a $\cin$-ring  $\Spec(\cin(M)^G)$ of 
  invariant functions.} %
  \quad 
\label{subsec:A.inv} We now turn to study $\Spec(\scA)$
where $\scA$ is a ring of invariant functions $\cin(M)^G$ for an
action of a Lie group $G$ on a manifold $M$.

\begin{lemma} \label{lem:A.6}
Let $G\times M\to M$ be an action of a Lie group $G$ on a manifold
$M$. Then the $\cin$-ring $\cin(M)^G$ of the invariant functions is
point determined.
\end{lemma}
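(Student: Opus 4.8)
The plan is to exploit the inclusion $\iota : \cin(M)^G \hookrightarrow \cin(M)$ of $\cin$-rings together with the fact that the points of $M$ already supply enough $\R$-points of $\cin(M)^G$ to detect nonzero elements. The whole argument hinges on one elementary observation: every element of $\cin(M)^G$ is, after all, an honest smooth function on $M$, and a smooth function that vanishes at every point of $M$ is the zero function.

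First I would record that for each $p \in M$ the evaluation $\ev_p : \cin(M) \to \R$, $\ev_p(h) = h(p)$, is a map of $\cin$-rings (this is the content of Remark~\ref{rmrk:Milnors}). Since $\iota$ is a map of $\cin$-rings, the composite $x_p := \ev_p \circ \iota : \cin(M)^G \to \R$ is again a map of $\cin$-rings, i.e.\ $x_p \in X_{\cin(M)^G}$. Thus $p \mapsto x_p$ produces a family of $\R$-points of $\cin(M)^G$ indexed by the points of $M$; I make no claim that these exhaust $X_{\cin(M)^G}$, and the proof will not need that.

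Now suppose $f \in \cin(M)^G$ satisfies $x(f) = 0$ for every $\R$-point $x \in X_{\cin(M)^G}$. Specializing to $x = x_p$ for each $p \in M$ gives $f(p) = \ev_p(\iota(f)) = x_p(f) = 0$ for all $p \in M$. Hence $f$ vanishes identically on $M$, so $f$ is the zero function and therefore $f = 0$ in $\cin(M)^G$. By Definition~\ref{def:A.2} this is precisely the assertion that $\cin(M)^G$ is point determined.

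There is essentially no technical obstacle here; the one point deserving care is conceptual rather than computational, namely that point-determinedness requires only a \emph{sufficient} supply of $\R$-points, so we are free to test against just the evaluations $x_p$ and may ignore any exotic $\R$-points of $\cin(M)^G$ that are not of evaluation type. Note also that neither the invariance of $f$ nor the $G$-action is actually used: the same argument shows verbatim that any $\cin$-subring of $\cin(M)$ is point determined.
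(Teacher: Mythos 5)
Your proof is correct and is essentially the paper's own argument: both use the evaluation maps $\ev_p$ at points $p\in M$ as $\R$-points of $\cin(M)^G$ and observe that a function vanishing at every point of $M$ is zero. Your extra remarks (that these evaluations need not exhaust $X_{\cin(M)^G}$, and that the argument applies to any $\cin$-subring of $\cin(M)$) are accurate but do not change the substance.
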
  
  
\begin{proof}
For a point $p\in M$ we have a homomorphism
\[
\ev_p: \cin(M)^G\to \R, \quad \ev_p(f) : = f(p)
\]  
If $f\in \cin(M)^G$ and $x(f) = 0$ for all $x\in \Hom(\cin(M)^G, \R)$,
then $0 =\ev_p(f) = f(p)$ for all $p\in M$. Therefore  $f=0$.
\end{proof}

Let $G\times M \to M$ be an action.  If two points $p,p'\in M$ lie on
the same $G$-orbit than for any invariant function $f\in \cin(M)$
\[
\ev_p(f) \equiv f(p) = f(p') \equiv \ev_{p'}.
\]
Hence the evaluation map
\[
  \ev :M\to X_{\cin(M)^G}, \quad p\mapsto \ev _p
\]
descends to a well-defined map
\begin{equation} \label{eq:A.evbar}
\overline{\ev}:M/G\to X_{\cin(M)^G}, \quad \overline{\ev} (G\cdot p)
= \ev_p.
\end{equation}
In general there is no reason for the map $\overline{\ev}$ 
to be a bijection.  If invariant functions separate orbits (which
happens when the action is proper) then $\overline{\ev}$ is
injective. In general it is not.  For instance consider the action of
the reals $\R$ on the torus $\T^2$ by irrational flow (cf.\
Subsection~\ref{subsec:sr}).  Then the ring of invariants
$\cin(\T^2)^\R$ consists of constant functions.  Consequently it has
exactly one $\R$ point and the map
$ \overline{\ev}:\T^2/\R \to X_{\cin(\T^2)^\R}$ is not injective.  We
now argue that for proper actions the map $\overline{\ev}$ of
\eqref{eq:A.evbar} is a homeomorphism.  We do it in a series of
lemmas.

\begin{lemma} \label{lem:proper}
Let $M$ be a topological space and $\scA \subset C^0(M)$ a $\cin$-subring.
Suppose $\scA$ contains a proper function.
Then any unital  $\R$-algebra map $\varphi: \scA\to \R$
is an evaluation at a point $p\in M$:
\[
\varphi(g) = g(p) \quad \text{ for all $g\in \scA$}.
\]
Consequently, all unital $\R$-algebra maps from $\scA$ to $\R$ are morphisms 
of $\cin$-rings and 
the map
\[
M \to \Hom(\scA, \R), \quad x \mapsto \ev_x
\]  
is surjective.   Here as before $\ev_x(g) : = g(x)$.
\end{lemma}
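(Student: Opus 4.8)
The plan is to produce, from a given unital $\R$-algebra map $\varphi:\scA\to\R$, an actual point $p\in M$ at which $\varphi$ is evaluation. For each $g\in\scA$ set $Z_g:=\{p\in M: g(p)=\varphi(g)\}$; since $g$ is continuous each $Z_g$ is closed, and exhibiting $\varphi=\ev_p$ amounts to finding a point in $\bigcap_{g\in\scA}Z_g$. The key to controlling this intersection is the proper function: writing $c:=\varphi(h)$ for a proper $h\in\scA$, the set $Z_h=h\inv(\{c\})$ is compact because $h$ is proper. It therefore suffices to show that the closed subsets $\{Z_g\cap Z_h\}_{g\in\scA}$ of the compact space $Z_h$ have the finite intersection property; compactness then yields a common point $p$, and that $p$ satisfies $g(p)=\varphi(g)$ for every $g\in\scA$, i.e.\ $\varphi=\ev_p$.

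For the finite intersection property I would proceed by contradiction. Given $g_1,\dots,g_n\in\scA$, suppose $Z_h\cap Z_{g_1}\cap\cdots\cap Z_{g_n}=\varnothing$ (the case $n=0$ simultaneously establishes $Z_h\ne\varnothing$). Consider
\[
G:=(h-\varphi(h))^2+\sum_{i=1}^n\bigl(g_i-\varphi(g_i)\bigr)^2,
\]
which lies in $\scA$ because $\scA$ is a unital subalgebra of $C^0(M)$ and contains the constants $\varphi(g)\cdot 1$. At every $p\in M$ at least one summand is strictly positive (the first if $p\notin Z_h$, and some $g_i$-term otherwise), so $G>0$ everywhere; meanwhile $\varphi(G)=0$, since $\varphi$ is unital and hence annihilates each $g-\varphi(g)\cdot 1$.

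The crux, and the step I expect to be the main obstacle, is turning the pair of facts ``$G>0$ everywhere'' and ``$\varphi(G)=0$'' into a contradiction, and this is precisely where both hypotheses of the lemma are indispensable. First I would observe that $G\ge (h-c)^2$ forces $G$ to be proper (its sublevel sets sit inside $h\inv([c-\sqrt R,c+\sqrt R])$), and a proper continuous function that is everywhere positive is bounded below by some $m>0$: a sequence with $G\to 0$ would lie in a compact sublevel set, and its limit would be a zero of $G$. Second I would invoke the $\cin$-ring structure of $\scA$, which a bare subalgebra need not have: choosing $\rho\in\cin(\R)$ with $\rho(t)=1/t$ for $t\ge m$, the element $\rho_\scA(G)\in\scA$ equals $1/G$ pointwise because $G$ takes values in $[m,\infty)$. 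Thus $G\,\rho_\scA(G)=1$ in $\scA$, so $\varphi(G)\,\varphi(\rho_\scA(G))=\varphi(1)=1$, contradicting $\varphi(G)=0$. Note that neither ingredient suffices alone: properness only bounds $G$ away from $0$, while the $\cin$-closure only lets one form $1/G$ once such a bound is available.

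Finally the two ``consequently'' clauses follow formally. Any evaluation $\ev_p$ is a morphism of $\cin$-rings because the operations on $\scA\subset C^0(M)$ are computed pointwise: $\ev_p(f_\scA(a_1,\dots,a_n))=f(a_1(p),\dots,a_n(p))=f_\R(\ev_p a_1,\dots,\ev_p a_n)$. Hence every unital $\R$-algebra map, being of the form $\ev_p$ by the main argument, is automatically a $\cin$-ring map. Conversely, each element of $\Hom(\scA,\R)$ is in particular a unital $\R$-algebra map, so it equals some $\ev_p$; therefore $M\to\Hom(\scA,\R)$, $x\mapsto\ev_x$, is surjective.
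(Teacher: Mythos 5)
Your proof is correct and follows essentially the same route as the paper's: the same sum-of-squares function built from a proper element and finitely many $g_i$, positivity bounded away from zero via properness, and the smooth reciprocal $\rho\in\cin(\R)$ exploiting the $\cin$-ring structure to contradict $\varphi$ being unital. The only differences are cosmetic --- you phrase the compactness step as the finite intersection property of the closed sets $Z_g\cap Z_h$ while the paper extracts a finite subcover of the level set $\{f=r\}$ by the open sets $U_g$, and you bound $G$ below via compact sublevel sets where the paper uses that a proper map to $\R$ is closed.
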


\begin{proof}  \footnote{I am greatful to Yael Karshon for
fixing a gap in an earlier version of the proof.}
  We use the same notation for a real number and for the 
  corresponding constant function in $C^0(M)$.  Since $\scA$ is a
  $\cin$-subring of $C^0(M)$, it contains these constant functions.
  
Let $\varphi  : \scA \to \R$ be a unital homomorphism of $\R$-algebras.
For $g\in \scA$ set 
\[
  U_g := \{ p \in M \mid \varphi(p) \neq g(p) \} ,
\]
We now argue by contradiction. Assume that for every $p \in M$ there
exists $g \in \scA$ such that $\varphi(p) \neq g(p)$.  Then
$\{U_g\}_{g\in \scA}$ is an open cover of $M$.  Let $f \in \scA$ be a
proper function, which exists by our assumption.  Let
\[
  r := \varphi(f).
\]  
Since  $f$ is proper, the level set $\{f = r \}$ is compact.
Therefore there exists $n>0$ and  $g_1,\ldots,g_n \in \scA$ such that 
\[
  \{f=r\}  \subseteq U_{g_1}\cup \ldots \cup U_{g_n}.
\]  
Then the  function
\[
  h(\cdot) := \big(f(\cdot)-r \big)^2 
   + \big(g_1(\cdot)-\varphi(g_1) \big)^2 
   + \ldots + \big(g_n(\cdot)-\varphi(g_n) \big)^2 \in \scA
 \]  
 is strictly positive.
Since $\varphi$ is a homomorphism of $\R$-algebras, $\varphi(h)=0$.
Since $h \geq (f(\cdot)-r)^2$ and since $(f(\cdot)-r)^2$ is proper, 
the function $h$ is proper as well. 
Since $h$ is  proper and since $\R$ is locally compact Hausdorff, $h$
is a closed map.  It follows, since $h$ is positive, that there is
$\varepsilon >0$ so that $h(m) > \varepsilon$ for all $m\in M$.
There  exists a smooth function $k \in \cin(\R)$
such that
\[
  k(y) = \frac{1}{y} \quad \text{ for all \ } y> \varepsilon.
\]
Note that $k\circ h\in \scA$ since $\scA$ is a $\cin$-subring of
$C^0(M)$ and the $\cin$-ring operations on $C^0(M)$ are given by composition.
Since $h \cdot (k \circ h) \equiv 1$
and $\varphi$ is an $\R$-algebra morphism,
$ \varphi(h) \cdot \varphi(k \circ h) = \varphi(1) $.
But $\varphi(h) = 0$, so $\varphi(1) = 0$, which contradicts that
$\varphi$ is unital.
\end{proof}

\begin{remark} \label{rmrk:inv}
Let  $G\times M\to M$ be an action of a Lie group $G$ on a manifold
$M$. Let $q:M\to M/G$ denote the orbit/quotient map (in the category
of topological spaces).  By the
universal property of $q$ for any $G$-invariant function $f:M\to \R$
there exists a unique continuous function $\overline{f}:M/G \to \R$ with $f =
\overline{f}\circ q$.  We thus get an injective map
\begin{equation} \label{eq:Upsilon}
\Upsilon: \cin(M)^G \to C^0 (M/G), \qquad f\mapsto \overline{f}
\end{equation}
of $\cin$-rings.
We define
\begin{equation} \label{eq:5.1}
\cin(M/G): = \Upsilon(\cin(M)^G) \qquad 
\end{equation}
Then 
\[
\Upsilon: \cin(M)^G \to  \cin(M/G)
\]
is an isomorphism of $\cin$-rings.
\end{remark}

\begin{lemma} \label{lem:proper2}
Let $G\times M\to M$ be a proper action of a Lie group
$G$ on a manifold $M$. Then the $\cin$-subring $\cin(M/G)$ of $C^0(M/G)$ defined in
Remark~\ref{rmrk:inv} contains a proper function. 
Consequently the map 
\[
 \overline{\ev}: M/G \to X_{\cin(M/G)}, \quad  \overline{\ev} (G\cdot p) (f) = f(p)  \textrm{ for all } f\in \cin(M)^G
\]
is a bijection.
\end{lemma}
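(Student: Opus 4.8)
The plan is to deduce the bijectivity of $\overline{\ev}$ from Lemma~\ref{lem:proper} once a proper function is produced in $\cin(M/G)$, and to get injectivity from the separation of orbits by invariant functions. First I would record the needed point-set topology. Since the action is proper the orbit relation is closed, so $M/G$ is Hausdorff; the orbit map $q\colon M\to M/G$ is an open continuous surjection, so, as $M$ is second countable and locally compact, $M/G$ is second countable, locally compact and Hausdorff, hence paracompact and $\sigma$-compact. Granting the existence of a proper function $\bar\rho\in\cin(M/G)$, apply Lemma~\ref{lem:proper} with the topological space $M/G$ and the $\cin$-subring $\cin(M/G)\subset C^0(M/G)$ of Remark~\ref{rmrk:inv}: it shows that every unital $\R$-algebra map $\cin(M/G)\to\R$, and in particular every element of $X_{\cin(M/G)}=\Hom(\cin(M/G),\R)$, is evaluation at a point of $M/G$. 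Under the isomorphism $\Upsilon\colon\cin(M)^G\xrightarrow{\ \sim\ }\cin(M/G)$ the map $\overline{\ev}$ is exactly $\bar p\mapsto\ev_{\bar p}$, so surjectivity follows.

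The heart of the matter is the construction of $\bar\rho$. I would choose a locally finite open cover $\{W_i\}_{i\in\N}$ of $M/G$ with each $\overline{W_i}$ compact, and set $U_i:=q^{-1}(W_i)$, a $G$-invariant, locally finite open cover of $M$ with $q(U_i)=W_i$ relatively compact. The essential input is that a proper action admits $G$-invariant smooth partitions of unity subordinate to any $G$-invariant open cover; this follows from Palais's slice theorem \cite{Palais} (for $G$ compact it is immediate by averaging). Let $\{\phi_i\}\subset\cin(M)^G$ be such a partition of unity subordinate to $\{U_i\}$ and set $\bar\phi_i:=\Upsilon(\phi_i)\in\cin(M/G)$, so $\sum_i\bar\phi_i=1$, $\bar\phi_i\ge 0$ and $\supp\bar\phi_i\subset\overline{W_i}$. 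Define
\[
\bar\rho:=\sum_{i\in\N} i\,\bar\phi_i=\Upsilon\Big(\sum_{i} i\,\phi_i\Big)\in\cin(M/G),
\]
the sum being locally finite, hence a genuine element of $\cin(M/G)$, with $\bar\rho\ge 1$. For any $c>0$, if $\bar\rho(x)\le c$ then $2c\sum_{i>2c}\bar\phi_i(x)\le\sum_i i\,\bar\phi_i(x)\le c$, so $\sum_{i>2c}\bar\phi_i(x)\le 1/2$ and therefore $\sum_{i\le 2c}\bar\phi_i(x)\ge 1/2>0$; hence $x\in\bigcup_{i\le 2c}\overline{W_i}$, a compact set. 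Thus $\bar\rho^{-1}([0,c])$ is closed inside a compact set, hence compact, and $\bar\rho$ is proper.

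For injectivity, suppose $\overline{\ev}(G\cdot p)=\overline{\ev}(G\cdot p')$, i.e. $f(p)=f(p')$ for all $f\in\cin(M)^G$. If $G\cdot p\ne G\cdot p'$, then by Hausdorffness and local compactness of $M/G$ there is a relatively compact open $W\ni q(p)$ with $q(p')\notin\overline W$; the same invariant-bump-function input produces $\phi\in\cin(M)^G$ with $\phi(p)=1$ and $\supp\phi\subset q^{-1}(W)$, whence $\phi(p')=0$, a contradiction. So invariant functions separate orbits and $\overline{\ev}$ is injective, completing the proof that it is a bijection.

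The main obstacle is precisely the existence of $G$-invariant smooth partitions of unity (equivalently, invariant bump functions adapted to invariant open sets) for a proper action of a possibly noncompact Lie group: this is where Palais's slice theorem is indispensable, the compact-group case being elementary by averaging, and both the construction of $\bar\rho$ and the separation of orbits rest on it.
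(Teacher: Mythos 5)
Your proposal is correct and follows essentially the same route as the paper: both rest on Lemma~\ref{lem:proper}, producing the required proper function as a weighted sum $\sum_n n\,\overline{f}_n$ over a $G$-invariant partition of unity with compact supports in $M/G$ (obtained from Palais), and both get injectivity from the separation of orbits by invariant functions under proper actions. The only difference is one of detail: the paper cites Palais's specific sequence and Lee's properness argument, while you reconstruct the partition of unity and verify the properness estimate and orbit separation explicitly.
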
  

\begin{proof}
As was mentioned before, since functions invariant under proper actions separate orbits, the
  map $\overline{\ev}$ is injective.

In the course of the proof of Theorem~4.3.1 in \cite{Palais} Palais
constructs a sequence of invariant functions $\{f_n\}_{n\in \N}
\subset \cin(M)^G$ with the property that their supports form a
locally finite cover of $M$, that $f_n(M)\subset [0,1]$ for all $n$,
$\sum f_n = 1$ and that sets $q(\supp (f_n)) \subset M/G$ are compact
(here as before $q:M\to M/G$ is the quotient map).  Let
$\overline{f}_n$ denote the image of $f_n$ in $C^0(M/G)$ under the map
$\Upsilon: \cin(M)^G \to C^0 (M/G)$ (cf.\ Remark~\ref{rmrk:inv}).
Then the function
\[ 
h = \sum_{n=1}^\infty n \overline{f}_n
\] 
is a desired proper function (cf.\ proof of Proposition~2.28 in
\cite{Lee}).

Hence for any $\varphi\in X_{\cin(M/G) } \simeq X_{\cin(M)^G}$ there is
  an orbit $G\cdot m\in M/G$ so that
  $\varphi(\overline{f}) = \overline{f}(G\cdot m) = f(m)$ for all
    $f\in \cin(M)^G$.  Here as before $\overline{f} = \Upsilon(f)$.
    Hence the map $\overline{\ev}$ is onto.  We have already observed
    that since the action of $G$ on $M$ is proper, the map
    $\overline{\ev}$ is injective.  Thus $\overline{\ev}$ is a
    bijection.
\end{proof}

\begin{lemma} \label{lem:5.8} Let $G\times M\to M$ be a proper action
  of a Lie group $G$ on a manifold $M$.  Then the space of points
  $X_{\cin(M)^G}$ of the $\cin$-ring of invariant functions is
  {\em homeomorphic} to the orbit space $M/G$.
\end{lemma}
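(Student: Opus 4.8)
The plan is to show that the map $\overline{\ev}\colon M/G \to X_{\cin(M)^G}$ of \eqref{eq:A.evbar} is a homeomorphism. Bijectivity is essentially already available: Lemma~\ref{lem:proper2} provides a bijection $\overline{\ev}\colon M/G \to X_{\cin(M/G)}$, and the $\cin$-ring isomorphism $\Upsilon\colon \cin(M)^G \to \cin(M/G)$ of Remark~\ref{rmrk:inv} induces a bijection $X_{\cin(M/G)} \to X_{\cin(M)^G}$, $y\mapsto y\circ\Upsilon$. Composing the two recovers exactly $\overline{\ev}$: evaluating on $f\in\cin(M)^G$, both send $G\cdot p$ to the $\R$-point $f\mapsto f(p)$. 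Hence $\overline{\ev}$ is a bijection, and it remains only to check that it and its inverse are continuous.

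For continuity, recall from Construction~\ref{constr:Z} that the sets $U_a = \{x : x(a)\neq 0\}$, $a\in\cin(M)^G$, form a basis for the Zariski topology on $X_{\cin(M)^G}$. Since each $a\in\cin(M)^G$ is $G$-invariant, the set $\{a\neq 0\}$ is a $G$-invariant open subset of $M$, and one computes $\overline{\ev}^{-1}(U_a) = q(\{a\neq 0\})$, where $q\colon M\to M/G$ is the orbit map. As $q$ is open, this set is open in $M/G$, so $\overline{\ev}$ is continuous.

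To control the inverse I would prove that $\overline{\ev}$ is an open map. Let $\bar V\subseteq M/G$ be open and $\bar p = G\cdot p\in\bar V$. The key input is an invariant bump function: an $f\in\cin(M)^G$ with $f(p)\neq 0$ and $\supp f\subseteq q^{-1}(\bar V)$. Granting this, $U_f$ is an open neighborhood of $\overline{\ev}(\bar p)$, and using surjectivity of $\overline{\ev}$ together with $\supp f\subseteq q^{-1}(\bar V)$ one sees $U_f\subseteq\overline{\ev}(\bar V)$: any $x=\overline{\ev}(G\cdot r)\in U_f$ satisfies $f(r)\neq 0$, forcing $r\in q^{-1}(\bar V)$ and hence $x\in\overline{\ev}(\bar V)$. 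As $\bar p$ ranges over $\bar V$, these sets $U_f$ cover $\overline{\ev}(\bar V)$, so $\overline{\ev}(\bar V)$ is open. Thus $\overline{\ev}$ is a continuous open bijection, hence a homeomorphism.

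The main obstacle is the existence of the invariant bump function $f$. I expect to obtain it from the slice theorem for proper actions (Palais~\cite{Palais}): the stabilizer $G_p$ is compact, a $G_p$-slice $S$ at $p$ may be shrunk so that the tube $G\cdot S\cong G\times_{G_p} S$ lies in $q^{-1}(\bar V)$, and averaging an ordinary bump function over the compact group $G_p$ yields a $G_p$-invariant bump on $S$ that extends by $G$-invariance (and by zero outside the tube) to the desired $f\in\cin(M)^G$. Equivalently, this step is precisely the statement that the invariant smooth functions generate the quotient topology, so that $M/G$ carries the initial topology for the family $\{\overline f\}$; this matches the intrinsic description of $\scT_{\cin(M)^G}$ as the coarsest topology making all the $a_*$ continuous (\cite{Joy}, cf.\ \eqref{eq:A.1}), from which the homeomorphism follows formally once bijectivity and $f_*\circ\overline{\ev}=\overline f$ are known.
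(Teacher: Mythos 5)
Your proposal is correct and follows essentially the same route as the paper: the identical bijectivity argument (Lemma~\ref{lem:proper2} composed with the isomorphism $\Upsilon$ of Remark~\ref{rmrk:inv}), and the identical key input --- invariant bump functions built from Palais slices and averaging over the compact stabilizer --- which the paper packages as Lemma~\ref{lem:5.10} (the quotient topology on $M/G$ is generated by the sets $\{h \neq 0\}$, $h \in \cin(M/G)$) before matching the bases of non-vanishing sets on the two sides. Your continuity-plus-openness phrasing is just an unpacking of that basis-matching step, as you yourself note in your closing remark.
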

  
\begin{proof} As we observed in Remark~\ref{rmrk:inv} the map $\Upsilon: \cin(M)^G\to
\cin(M/G)$ is an isomorphism of $\cin$-rings, hence $X_{\cin(M)^G}$ and
$X_{\cin(M/G}$ are homeomorphic. By 
Lemma~\ref{lem:proper2} %
the evaluation map
\[
\overline{\ev}: M/G \to X_{\cin(M)^G}, \quad \overline{\ev} (G\cdot
 p) (f) = f(p) \textrm{ for all } f\in \cin(M)^G%
 \]
is a bijection.   We need to show that $\ev$ is a homeomorphism.  By
Lemma~\ref{lem:5.10} below,  the topology on $M/G$ is generated by the sets
of the form $\{h\not = 0\} := \{p\in M/G \mid h(p)\not = 0\}$, $h\in \cin(M/G)$.  On the other hand the
topology on $X_{\cin(M/G)}$ is generated by the sets of the form
\[
\{f\not = 0\}:= \{x\in
X_{\cin(M/G)} \mid x(f) \not = 0\}, 
\]
$f\in \cin(M/G)$.  Now observe that
\[
  (\ev)\inv (\{x\in  X_{\cin(M/G)} \mid x(f) \not = 0\}) = \{ p\in
  M/G \mid f(p) \not = 0\}.
\]
Hence $\ev$ is a homeomorphism.
\end{proof}

\begin{lemma} \label{lem:5.10}
Let $G\times M\to M$ be a proper action of a Lie group $G$ on a
manifold $M$, $q:M\to M/G$ the quotient map and $\cin(M/G) \subset
C^0(M/G)$ defined by \eqref{eq:5.1}. The quotient topology on
$M/G$ is the smallest topology making all the functions in $\cin(M/G)$
continuous.  That is, the quotient topology is generated by the sets of the form $\{h\not
=0\}$ for all $h\in \cin(M/G)$.
\end{lemma}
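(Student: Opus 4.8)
The plan is to prove that the quotient topology $\tau_q$ and the topology $\tau_{\mathrm{gen}}$ generated by the sets $\{h\neq 0\}$, $h\in\cin(M/G)$, coincide, after first identifying $\tau_{\mathrm{gen}}$ with the coarsest topology $\tau_{\mathrm{init}}$ making every $h\in\cin(M/G)$ continuous. For this identification I would use that $\cin(M/G)$ is a $\cin$-ring (Remark~\ref{rmrk:inv}): for any open $W\subseteq\R$ there is a smooth $\chi\in\cin(\R)$ with $\{\chi\neq 0\}=W$ (every open subset of $\R$ is a non-vanishing set, cf.\ Remark~\ref{rmrk:2.Whit}), whence $\chi\circ h\in\cin(M/G)$ and $h\inv(W)=\{\chi\circ h\neq 0\}$. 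Since also $\{h_1\neq 0\}\cap\{h_2\neq 0\}=\{h_1h_2\neq 0\}$, the sets $\{h\neq 0\}$ form a basis for $\tau_{\mathrm{init}}$, so $\tau_{\mathrm{gen}}=\tau_{\mathrm{init}}$.

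The easy inclusion $\tau_{\mathrm{init}}\subseteq\tau_q$ is immediate: each $h=\overline{f}\in\cin(M/G)$ satisfies $f=h\circ q$ with $f$ smooth, so by the universal property of the quotient map $q$ the function $h$ is continuous for $\tau_q$. Thus $\tau_q$ makes every $h$ continuous, and being the coarsest such topology, $\tau_{\mathrm{init}}$ is contained in $\tau_q$.

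The substance of the proof, and its main obstacle, is the reverse inclusion $\tau_q\subseteq\tau_{\mathrm{gen}}$. Fix a quotient-open $V\subseteq M/G$ and a point $[p]\in V$; it suffices to produce $h\in\cin(M/G)$ with $h([p])\neq 0$ and $\{h\neq 0\}\subseteq V$. Set $U:=q\inv(V)$, an open $G$-invariant neighborhood of the orbit $G\cdot p$. Because the action is proper, the stabilizer $G_p$ is compact and Palais's slice theorem \cite{Palais} provides a slice $S$ at $p$ with $G\cdot S\cong G\times_{G_p}S$ an open invariant tube; shrinking $S$ I may assume $G\cdot S\subseteq U$. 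Using compactness of $G_p$ (average a bump function over Haar measure) I choose a $G_p$-invariant $\psi\in\cin(S)$ with $\psi(p)\neq 0$ and $K:=\supp\psi$ compact, and define $f$ on $G\cdot S$ by $f([g,s])=\psi(s)$; this is a well-defined smooth $G$-invariant function.

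The one point requiring care is that $f$ extends by zero to a smooth invariant function on all of $M$. Here I would use that for a proper action $G\cdot K$ is closed in $M$ whenever $K$ is compact: if $g_nx_n\to y$ with $x_n\in K$, then after passing to a subsequence $x_n\to x\in K$, and properness of the map $(g,x)\mapsto(gx,x)$ forces $\{(g_n,x_n)\}$ to have a convergent subsequence, so $g_n\to g$ and $y=gx\in G\cdot K$. Thus $\{f\neq 0\}\subseteq G\cdot K$ with $G\cdot K$ closed and contained in the open set $G\cdot S$, so extending $f$ by $0$ outside $G\cdot S$ yields $f\in\cin(M)^G$ with $\{f\neq 0\}\subseteq G\cdot K\subseteq U$. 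Putting $h:=\Upsilon(f)=\overline{f}\in\cin(M/G)$ (notation of Remark~\ref{rmrk:inv}) gives $h([p])=f(p)\neq 0$ and $\{h\neq 0\}=q(\{f\neq 0\})\subseteq q(U)=V$, so $[p]\in\{h\neq 0\}\subseteq V$. This establishes $\tau_q\subseteq\tau_{\mathrm{gen}}$, and chaining $\tau_{\mathrm{init}}\subseteq\tau_q\subseteq\tau_{\mathrm{gen}}=\tau_{\mathrm{init}}$ gives equality throughout.
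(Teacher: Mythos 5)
Your proof is correct, and its geometric heart coincides with the paper's: both invoke Palais's slice theorem for the proper action, build a compactly supported invariant bump function on the slice (by averaging over the compact stabilizer), and push it forward / extend by zero to get an element of $\cin(M)^G$ that is nonzero at the given point and whose nonvanishing set stays inside the prescribed invariant open set. Where you differ is in the topological packaging. The paper reduces the statement to a separation criterion by citing Lemma~2.15 of \cite{KL}: it suffices to produce, for each point $p$ and each closed $C\subset M/G$ with $p\notin C$, a function $f\in\cin(M/G)$ identically $1$ near $p$ with $\supp(f)\cap C=\varnothing$. You instead argue directly from the definition of the generated topology: you first identify the topology generated by the sets $\{h\neq 0\}$ with the initial topology for $\cin(M/G)$ (via a smooth $\chi$ with $\{\chi\neq 0\}=W$ and the identity $\{h_1\neq 0\}\cap\{h_2\neq 0\}=\{h_1h_2\neq 0\}$), and then exhibit every quotient-open set as a union of basic sets $\{h\neq 0\}$. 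This makes your argument self-contained, at the cost of redoing bookkeeping the paper outsources. You also make explicit a point the paper passes over silently: that the extension by zero is smooth, which you justify by proving that $G\cdot K$ is closed in $M$ for compact $K$ under a proper action; the paper's step ``extend $\tau$ by $0$ to all of $M/G$'' implicitly relies on the same fact.
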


\begin{proof}
By \cite{KL}[Lemma~2.15] it is enough to show that for any point $p\in
M/G$ and for any closed set $C\subset M/G$ with $p\not \in C$ there is
a function $f\in \cin(M/G)$ with $\supp(f) \cap C = \varnothing$ and
$f$ identically 1 near $p$.

To prove the existence of $f$ choose a point $m\in q\inv (p) \subset
M$.  Let $K$ denote the stabilizer of $m$.  Since the action of $G$ is
proper there is a slice $\Sigma$ through $m$ for the action.   Then
$G\cdot \Sigma$ is open in $M$, is diffeomorphic to $(G\times
\Sigma)/K$ and $(G\cdot \Sigma)/G \simeq \Sigma/K$ as topological
spaces. Consequently $q (G\cdot \Sigma)$ is an open neighborhood of
$p$ in $M/G$ and the composite $\Sigma\hookrightarrow M
\xrightarrow{q} M/G$ descends to an open embedding $q': \Sigma/K \to
M/G$.  On $\Sigma$ choose a smooth $K$-invariant function $\rho$ so
that $\rho$ is identically 1 near $m$ and $\supp (\rho)$ is contained
in a compact set which is disjoint from $(q')\inv (C)$.  Since $\rho$
is $K$-invariant, it descends to a continuous function $\tau$ on
$\Sigma/K\simeq (G\cdot \Sigma)/G\hookrightarrow M/G$.  Extend $\tau$
by 0 to all of $M/G$.  The resulting function is the desired function $f$.
\end{proof}

\begin{lemma}
Let $G\times M\to M$ be a proper action of a Lie group $G$ on a
manifold $M$ and let $q:M\to M/G$ denote the quotient map as before..  Then the affine $\cin$-scheme $\Spec(\cin(M^G))$ of the
ring of invariant functions is isomorphic to $(M/G, \cin_{M/G})$ where 
\[
\cin_{M/G} (U) = \{ f:U\to \R\mid f\circ q \in \cin(q\inv (U))\}
\]
for all open sets $U\subset M/G$.
\end{lemma}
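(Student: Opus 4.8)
The plan is to combine the homeomorphism of underlying spaces from Lemma~\ref{lem:5.8} with the description of the structure sheaf of a point determined $\cin$-ring from Lemma~\ref{lem:A.3}, and then to identify the resulting sheaf on $M/G$ with $\cin_{M/G}$. By Lemma~\ref{lem:5.8} the map $\overline{\ev}:M/G\to X_{\cin(M)^G}$ is a homeomorphism, so only the structure sheaves remain to be matched. By Lemma~\ref{lem:A.6} the ring $\cin(M)^G$ is point determined, hence by Lemma~\ref{lem:A.3} its structure sheaf $\scO_{\cin(M)^G}$ is the sheafification of the presheaf $U\mapsto\{a_*|_U\mid a\in\cin(M)^G\}$, where $a_*(x)=x(a)$. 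For $a=f\in\cin(M)^G$ and a point $G\cdot p\in M/G$ one has $a_*(\overline{\ev}(G\cdot p))=\overline{\ev}(G\cdot p)(f)=f(p)=\Upsilon(f)(G\cdot p)$, so pulling back along $\overline{\ev}$ identifies this presheaf with $U\mapsto\{h|_U\mid h\in\cin(M/G)\}$ on $M/G$, using that $\Upsilon:\cin(M)^G\to\cin(M/G)$ is an isomorphism (Remark~\ref{rmrk:inv}). Thus I must show that the sheafification of the presheaf of restrictions of global functions in $\cin(M/G)$ is exactly $\cin_{M/G}$.

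Since $\cin_{M/G}$ is defined by the local condition $f\circ q\in\cin(q\inv(U))$, it is a sheaf, and it contains the presheaf above: for $h=\Upsilon(f)$ with $f\in\cin(M)^G$ we have $(h|_U)\circ q=f|_{q\inv(U)}$, which is smooth. Consequently the sheafification embeds into $\cin_{M/G}$, and the real content of the lemma is the reverse inclusion, namely that every $g\in\cin_{M/G}(U)$ agrees, in a neighborhood of each point, with the image under $\Upsilon$ of a global invariant function.

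To prove this local statement, which I expect to be the main obstacle, fix $g\in\cin_{M/G}(U)$ and $p\in U$. Applying the construction in the proof of Lemma~\ref{lem:5.10} to the point $p$ and the closed set $C:=(M/G)\smallsetminus U$ produces a function $\chi\in\cin(M/G)$ with $\chi\equiv 1$ near $p$ and $\supp(\chi)\subset U$. Then $g\chi$, extended by $0$ outside $\supp(\chi)$, lies globally in $\cin_{M/G}$: its pullback $(g\chi)\circ q=(g\circ q)(\chi\circ q)$ is smooth on $q\inv(U)$ and vanishes on the open set $M\smallsetminus q\inv(\supp\chi)$, and these two open sets cover $M$. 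Hence $(g\chi)\circ q\in\cin(M)^G$ and $g\chi=\Upsilon((g\chi)\circ q)\in\cin(M/G)$, while $g\chi=g$ on the open neighborhood $\{\chi=1\}$ of $p$. This exhibits $g$ as locally a restriction of a global element of $\cin(M/G)$, so $g$ lies in the sheafification.

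The two sheaves therefore coincide as subsheaves of the sheaf of continuous functions on $M/G$, and since the $\cin$-ring operations on both sides are given by composition, this coincidence is an isomorphism of sheaves of $\cin$-rings. Combined with the homeomorphism $\overline{\ev}$ (whose stalks are automatically local), this yields the desired isomorphism $\Spec(\cin(M)^G)\cong(M/G,\cin_{M/G})$ of local $\cin$-ringed spaces.
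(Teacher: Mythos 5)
Your proposal is correct and takes essentially the same route as the paper's own proof: the homeomorphism $\overline{\ev}$ of Lemma~\ref{lem:5.8}, point-determinedness of $\cin(M)^G$ (Lemma~\ref{lem:A.6}), and the presheaf description of the structure sheaf (Lemma~\ref{lem:A.3}), transported to $M/G$ via the isomorphism $\Upsilon$. The only difference is that you explicitly verify that the sheafification of the presheaf of restrictions of global functions in $\cin(M/G)$ coincides with $\cin_{M/G}$, using the invariant bump functions constructed in the proof of Lemma~\ref{lem:5.10}; the paper leaves this final identification implicit in the phrase ``Lemma~\ref{lem:A.3} now implies the desired result,'' so your write-up is, if anything, more complete on that point.
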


\begin{proof}
By Lemma~\ref{lem:5.8} the space of points $X_{\cin(M)^G}$ is
homeomorphic to the orbit space $M/G$ by way of  the map
\[
\overline{\ev}: M/G\to X_{\cin(M)^G}, \quad
\left(\overline{\ev}(G\cdot p) \right) (f) = f(p).
\]  
Consequently
\begin{equation} \label{eq:A.7}
(\overline{\ev})^*: C^0 (X_{\cin(M)^G} )\to C^0(M/G), \quad h\mapsto
  h\circ \overline{\ev}
\end{equation}  
is an isomorphism of $\cin$-rings.

By Lemma~\ref{lem:A.6} the $\cin$-ring $\cin(M)^G$ is point
determined, hence can be identified with a $\cin$-subring of
$C^0(X_{\cin(M)^G})$ by way of
\begin{equation} \label{eq:A.8}
(\,\,)_*: \cin(M)^G \to C^0(X_{\cin(M)^G}), \quad f\mapsto f_*
\end{equation}
where $f_*(x) := x (f)$ for all $f\in \cin(M)^G$ and all $x\in
X_{\cin(M)^G}$.  Since for any $f\in \cin(M)^G$
\[
((\overline{\ev})^* f_*) (G\cdot p) = \left(\ev(G\cdot p)\right) (f) = f(p)
\]  
the composite of \eqref{eq:A.8} and \eqref{eq:A.7} is the map
$\Upsilon: \cin(M)^G \to C^0(M/G)$ that sends the invariant function
$f\in \cin(M)^G$ to the unique function $\overline{f}:M/G\to \R$ with
$\overline{f} \circ q = f$ (cf.\ Lemma~\ref{rmrk:inv}).
Lemma~\ref{lem:A.3} now implies the desired result.
\end{proof}  

\subsection{$\Spec(\cin(M)^G)$ when the action $G\times M\to M$ is
  proper and $\cin(M)^G$ is finitely generated.} \label{subsec:inv_fg}\mbox{}\\
We now put together Subsections~\ref{subsec:spec_fg} and \ref{subsec:A.inv}.
Suppose now $G\times M\to M$ is a proper action and the $\cin$-ring of
invariants $\cin(M)^G$ is finitely generated (hence there is a surjective
map $\varpi: \cin(\R^N) \to \cin(M)^G$ for some $N$).  Then, one one
hand, 
\[
\Spec (\cin(M)^G ) \simeq (Z, \cin_Z)
\]  
where as before
\[
Z = \{p\in \R^N\mid f(p) = 0 \textrm{ for all } f\in \ker\varpi\}
\]
is the zero set of the kernel of $\varpi$ and the sheaf $\cin_Z$ is
given by
\[
\cin_Z(U) = \cin(\widetilde{U} )|_U
\]  
for any $\widetilde{U}\in \Open (\R^n)$ with $\widetilde{U} \cap Z =
U$.
On the other hand 
\[
\Spec (\cin(M)^G)  \simeq (M/G, \cin_{M/G}).%
\]
By composing the two isomorphisms of local $\cin$-ringed spaces we get
an isomorphism
\[
\uu{\varphi} = (\varphi, \varphi_\#): (M/G, \cin_{M/G}) \to (Z, \cin_Z).
\]
We now write down $\uu{\varphi}$ explicitly.  Let $x_i:\R^N\to \R$,
$1\leq i\leq N$,
denote the coordinate functions. Then their images $f_i =
\varpi(x_i)$, $1\leq i\leq N$, generate $\cin(M)^G$.  Let 
\[
F = (f_1,\ldots, f_N):M\to \R^N.
\]  
Then
\[
g\circ F = \varpi(g)
\]  
for all $g\in \cin(\R^N)$, i.e., $\varpi = F^*$.  The closed embedding
\[
\Spec(\varpi): (X_{\cin(M)^G}, \scO_{\cin(M)^G}) \simeq (M/G,
\cin_{M/G} \to \Spec(\cin(\R^N) = (\R^N, \cin_{\R^N})
\]  
is then $\Spec(\varpi) = (\overline{F}, F_\#)$ where
$\overline{F}:M/G \to \R^N$ is the continuous map induced by $F:M \to
\R^N$ (i.e., $\overline{F}(G\cdot p) = F(p)$) and $F_\#: \cin_{\R^N}
\to \cin_{M/G}$ is
\[
F_{\#, W}: \cin(W) \to \cin_{M/G} ((\overline{F})\inv (W)0 \simeq
\cin(F\inv (W)^G, \quad h\mapsto h\circ F
\]
for all open sets $W\subset \R^N$.  Then
\[
\overline{F}(M/G) = F(M) = Z
\]  
and $\ker(F_{\#})$ is the ideal sheaf of $Z$.  Thus $\Spec(\varpi)$
factors through
\[
(\varphi, \varphi_\#): (M/G, \cin_{M/G}) \to (Z, \cin_Z)
\]  
where $\varphi (G\cdot p) = F(p)$ and
\[
\varphi_{\#, W} : \cin_Z(W) \to \cin(F\inv (W)^G, \quad \varphi_{\#,
  W} (h)  = h\circ F.
\]  
\mbox{}

\section{A remark on the Cushman-Weinstein conjecture} \label{sec:CW}

Suppose $M$ is a symplectic manifold with a proper Hamiltonian action
of a Lie group $G$ and an associated equivariant moment map $\mu:M\to
\fg^\vee$.  Suppose further that the $\cin$-ring $\cin(M)^G$ is
finitely generated.    Then the $\cin$-ring of invariants $\cin(M)^G$
is Poisson and consequently $\Spec(\cin(M)^G)\simeq (M/G, \cin_{M/G})$
is a Poisson $\cin$-scheme.  Moreover as we saw in \ref{subsec:inv_fg}
a choice of a surjective map
$\varpi:\cin(\R^N) \to \cin(M)^G$ (i.e., a choice of generators of
$\cin(M)^G$) gives rise to a closed embedding
\[
\Spec(\varpi): (M/G, \cin_{M/G}) \to (\R^N, \cin_{\R^N}),
\]  
which induces an isomorphism
\[
(M/G, \cin_{M/G})  \to (Z, \cin_Z)
\]
for an appropriate closed subset  $Z $ of $ \R^N$. 
In particular
the $\cin$-ring
\[
\cin_Z(Z) = \cin(Z)=  \cin(\R^N)|_Z
\]  
carries a Poisson bracket.   In 1980s Cushman \cite{SL} and,
independently, Weinstein \cite{Eg-th, Eg-era} conjectured that the
Poisson bracket on $\cin(Z) $ extends to a Poisson bracket on
$\cin(\R^N)$.   Egilsson \cite{Eg-th, Eg-era} showed that the
conjecture is false as stated.  Further work in this direction was
done by Davis \cite{Davis}  and McMillan \cite{Mc}.  In this section
we make the following observation.

\begin{lemma}
Let $q: \cin(\R^N)\to \scB$ be a surjective map of $\cin$-rings.
Suppose $\scB$ is Poisson with the bracket $\{\cdot, \cdot\}$. There
exists a bivector field $\Lambda$ on $\R^N$ such that
\[
q(\langle df \wedge dg, \Lambda \rangle ) = \{q(f), q(g)\}
\]
for all $f, g\in \cin(\R^N)$.
\end{lemma}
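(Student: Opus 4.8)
The plan is to build $\Lambda$ from its action on the coordinate functions and then verify the identity by reducing everything to the generators, using the freeness of $\cin(\R^N)$ and the biderivation property of the bracket on $\scB$. Write $x_1,\dots,x_N\in\cin(\R^N)$ for the standard coordinate functions and set $b_i:=q(x_i)\in\scB$. Since $q$ is a surjective map of $\cin$-rings, for each pair $i<j$ I can choose $\Lambda_{ij}\in\cin(\R^N)$ with $q(\Lambda_{ij})=\{b_i,b_j\}$; extending antisymmetrically via $\Lambda_{ji}:=-\Lambda_{ij}$ and $\Lambda_{ii}:=0$, I define the bivector field
\[
\Lambda:=\sum_{i<j}\Lambda_{ij}\,\partial_i\wedge\partial_j ,
\]
for which a direct expansion of the pairing gives $\langle df\wedge dg,\Lambda\rangle=\sum_{i,j}\Lambda_{ij}\,(\partial_i f)(\partial_j g)$ for all $f,g\in\cin(\R^N)$.

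Next I would record two structural facts. First, because $\cin(\R^N)$ is freely generated by $x_1,\dots,x_N$ (see Subsection~\ref{subsec:spec_fg}), every $f$ satisfies $f=f_{\cin(\R^N)}(x_1,\dots,x_N)$, and since $q$ is a map of $\cin$-rings we get $q(f)=f_\scB(b_1,\dots,b_N)$ and likewise $q(\partial_i f)=(\partial_i f)_\scB(b_1,\dots,b_N)$. Second, by Definition~\ref{def:Poisson_ring} the map $\{a,\cdot\}$ is a $\cin$-derivation for every $a\in\scB$, and by antisymmetry $\{\cdot,a\}=-\{a,\cdot\}$ is one as well; hence the bracket on $\scB$ is a $\cin$-ring biderivation.

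I would then compute the two sides separately, writing $f_\scB(b)$ for $f_\scB(b_1,\dots,b_N)$. Applying $q$ to the left-hand side and using that $q$ is a ring homomorphism with $q(\Lambda_{ij})=\{b_i,b_j\}$,
\[
q\big(\langle df\wedge dg,\Lambda\rangle\big)=\sum_{i,j}\{b_i,b_j\}\,(\partial_i f)_\scB(b)\,(\partial_j g)_\scB(b).
\]
For the right-hand side, expanding $\{q(f),q(g)\}=\{f_\scB(b),g_\scB(b)\}$ with the biderivation property twice — first in the second slot, then (after an antisymmetry flip) in the first — yields
\[
\{q(f),q(g)\}=\sum_{i,j}(\partial_i f)_\scB(b)\,\{b_i,b_j\}\,(\partial_j g)_\scB(b).
\]
Since $\scB$ is commutative, the two expressions coincide, which establishes the identity.

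I do not anticipate a genuine obstacle: the bivector is essentially forced, being determined by its coordinate components, and the verification is a two-step application of the biderivation rule. The point worth emphasizing is that nothing in this argument constrains $\Lambda$ to satisfy the Jacobi identity; $\Lambda$ is merely \emph{some} bivector field whose pushforward along $q$ recovers the given bracket. This is exactly why the lemma stops short of resolving the Cushman--Weinstein conjecture, in which one demands a genuine Poisson tensor on $\R^N$.
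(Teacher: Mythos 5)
Your proof is correct and follows essentially the same route as the paper's: choose preimages of the brackets $\{q(x_i),q(x_j)\}$ of the generators, assemble them into a bivector field, and verify the identity using the freeness of $\cin(\R^N)$ and the biderivation property of the bracket on $\scB$. Your explicit antisymmetric choice of the coefficients $\Lambda_{ij}$ and the spelled-out two-step biderivation expansion are just slightly more careful renderings of the steps the paper leaves implicit.
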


\begin{proof}
Recall that the $\cin$-ring $\cin(\R^N)$ is freely generated by the
coordinate functions $x_1,\ldots, x_N:\R^N\to \R$.  More concretely,
for any $f\in \cin(\R^N)$ we have
\[
f = (f)_{\cin(\R^N)}(x_1, \ldots, x_N).
\]  
Since by assumption $q:\cin(\R^N)\to \scB$ is surjective, for all
indicies $i$ and $j$ there exist $c_{ij}\in \cin(\R^N)$ so that
$q(c_{ij}) = \{q(x_i), q(x_j)\}$.  Define the bivector field $\Lambda$
by
\[
\Lambda = \frac{1}{2}\sum_{i,j} c_{ij} \partial _i \wedge \partial_j.
\]  
Then
\[
  \langle df \wedge dg, \Lambda \rangle = \sum _{i,j} c_{ij}
  \partial_i f \,\partial_j g.
\]
Consequently
\[
 \begin{split} 
  q(\langle df \wedge dg, \Lambda \rangle)& =\sum _{ij}q( c_{ij})
  q(\partial_i f )\,q(\partial_j g) \\
&= \sum _{ij} (\partial_i f)_\scB (q(x_1),\ldots, q(x_N) (\partial_j
g)_\scB (q(x_1),\ldots, q(x_N) \{q(x_i) , q(x_j)\}\\
&= \{ f_\scB (q(x_1),\ldots, q(x_N) , g_\scB (q(x_1),\ldots, q(x_N) \}
= \{q(f), q(g)\}
\end{split}
\]
for all $f, g\in \cin(\R^N)$.

\end{proof}  

It follows that if $(M, \omega, \mu:M\to \fg^\vee)$ is a Hamiltonian $G$-space, the action of $G$ on $M$ is proper and the ring of invariants $\cin(M)^G$ is finitely generated with generators $f_1, \ldots f_N$ then 
\[
F:M\to \R^N, \quad F(m) = (f_1(m), \ldots f_N(m))
\]
induces an embedding $\overline{F}:M/G\to \R^N$ so that $Z = \overline{F}(M/G)$ is closed in $\R^N$ and ${\overline{F}}^* :\cin(Z) \to \cin(M/G)$ is an isomorphism of $\cin$-rings.
Consequently $\cin(Z) = \cin(\R^N)|_Z$ is a Poisson $\cin$-ring, and
the Poisson bracket on $\cin(Z)$ extends (non-uniquely) to a bivector
field on $\R^N$.

\end{document}